\newcommand{\Z}{\mathbb{Z}}
\newcommand{\R}{\mathbb{R}}
\newcommand{\C}{\mathbb{C}}
\newcommand{\cM}{\mathcal{M}}
\newcommand{\cP}{\mathcal{P}}
\newcommand{\re}{\operatorname{Re}}
\newcommand{\im}{\operatorname{Im}}
\newcommand{\area}{\operatorname{area}}
\numberwithin{equation}{section}
\numberwithin{figure}{section}
\theoremstyle{plain} 
\newtheorem{theorem}{Theorem}[section]
\newtheorem{lemma}[theorem]{Lemma}
\newtheorem{corollary}[theorem]{Corollary}
\newtheorem{proposition}[theorem]{Proposition}
\theoremstyle{definition} 
\newtheorem{definition}[theorem]{Definition}
\newtheorem{example}[theorem]{Example}
\newtheorem*{question}{Question}
\newtheorem*{notation}{Notation}
\author[Drew Reisinger]{Drew Reisinger}
   \address{University of Evansville}
   \email{Drew Reisinger <drewissimo333@gmail.com>}
\author[Matthias Weber]{Matthias Weber}
   \address{Indiana University}
   \email{matweber@indiana.edu}
    \urladdr{http://www.indiana.edu/~minimal}
  \thanks{Both authors were partially supported by NSF grant DMS-1156515. The second author was partially supported by  grant 246039 from the Simons Foundation}
\date{}  
\title{Moduli of Parallelogram Tilings and Curve Systems}
\begin{document}

\begin{abstract}
We  determine the topology of the moduli space of periodic tilings of the plane by parallelograms. To each such tiling, we associate combinatorial data  via the zone curves of the tiling. We show that all tilings with the same combinatorial data form an open subset in a suitable Euclidean space that is homotopy equivalent to a circle. Moreover, for any choice of combinatorial data, we construct a canonical tiling with these data.
\end{abstract}

\subjclass[2000]{Primary 53A10 ; Secondary 49Q05, 53C42.}

\maketitle

\section{Introduction}
The purpose of this paper is to study deformation spaces  of periodic tilings of the plane by parallelograms that are edge-to-edge, and related topics.

We associate to any such tiling a curve system on the quotient torus of the plane by the period lattice of the tiling. The curves are obtained as the zone curves
of the tiling that arise when traversing opposite edges of the parallelograms, very much like one does for  zonohedra (\cite{coxeter}). In addition to these topological data, we associate to each zone curve the vector of the edge that is being traversed as geometric data.

Thus we are able to to separate  the combinatorial information of a periodic tiling from its geometric data. This approach to study deformation spaces was was pioneered by Penner (\cite{penner}), where it leads to a cell decomposition of the Teichmuller space of punctured Riemann surfaces.

In our case we show that the deformation space of periodic tilings for a fixed curve system with $n$ curves is a complex  $n$ dimensional connected open subset of  Euclidean space $\C^n$ that is homeomorphic to an annular neighborhood of a circle. A surprising byproduct of the proof is that we construct a canonical tiling, up to similarity, for any given curve system. In other words, topological tilings of tori by quadrilaterals have canonical Euclidean realizations.

\medskip

Slightly more general  curve systems arise in the the case of \emph{quadrangulations}, i.e. surfaces of higher genus that are topologically tiled by quadrilaterals (see \cite{jsk} for applications in in computer vision).
We show that the topological curve systems that arise in this general setting 
are up to isotopy in one-to-one correspondence with certain simple combinatorial data that encode the intersection pattern of the curve system. This allows for an algorithmic treatment
of the periodic tilings  and also opens the way to look at geometric realizations of higher genus surfaces in Euclidean space, either as closed or as periodic surfaces.

\medskip

 The organization of this paper is as follows: In section 2, we  introduce topological surfaces with curve systems and extract their combinatorial intersection patterns as what we call \emph{combinatorial curve systems}. We then go on to prove that surfaces with curve systems are in one-to-one correspondence with combinatorial curve system under suitable identifications.
 
 In section 3, we turn to periodic tilings of the plane by parallelograms.  We use zones to define a canonical zone  curve system on the quotient torus of the tiled plane by its period lattice. 
 We give a necessary and sufficient condition for a choice of edge data to produce a periodic tiling and show that this condition can always be satisfied.
 
 In section 4, we
fix a  combinatorial curve system of genus 1 and describe the moduli space of periodic tilings with that underlying combinatorial curve system. More concretely, we determine its dimension, topological type, and local boundary structure. The key idea here is to use the intersection matrix of the curve system. It turns out that its eigenvector for a non-zero eigenvalue can be used to define \emph{canonical} edge data. All other edge data for the same combinatorial curve system can be geometrically deformed into the canonical edge data.

\section{Combinatorial Curve Systems for Quadrilateral Tilings of Surfaces}

Let $S$ be an oriented, closed (but not necessarily connected) surface.
\begin{definition}
\label{curve_sys}
A \emph{topological curve system} on  $S$ is a finite list of regular, oriented simple closed curves  $\Gamma = (\gamma_1, \gamma_2, \ldots , \gamma_m)$ on 
$S$, $m \ge 2$, such that 
\begin{enumerate}
\item all intersections between curves are double points,
\item  every curve intersects at least one other curve, and
\item each component of $S \backslash \Gamma$  is a  topological disk (here $\Gamma$ is identified with the trace of its curves).
\end{enumerate}
\end{definition}
\begin{figure}[h]
\begin{center}
\includegraphics[width=0.5\linewidth]{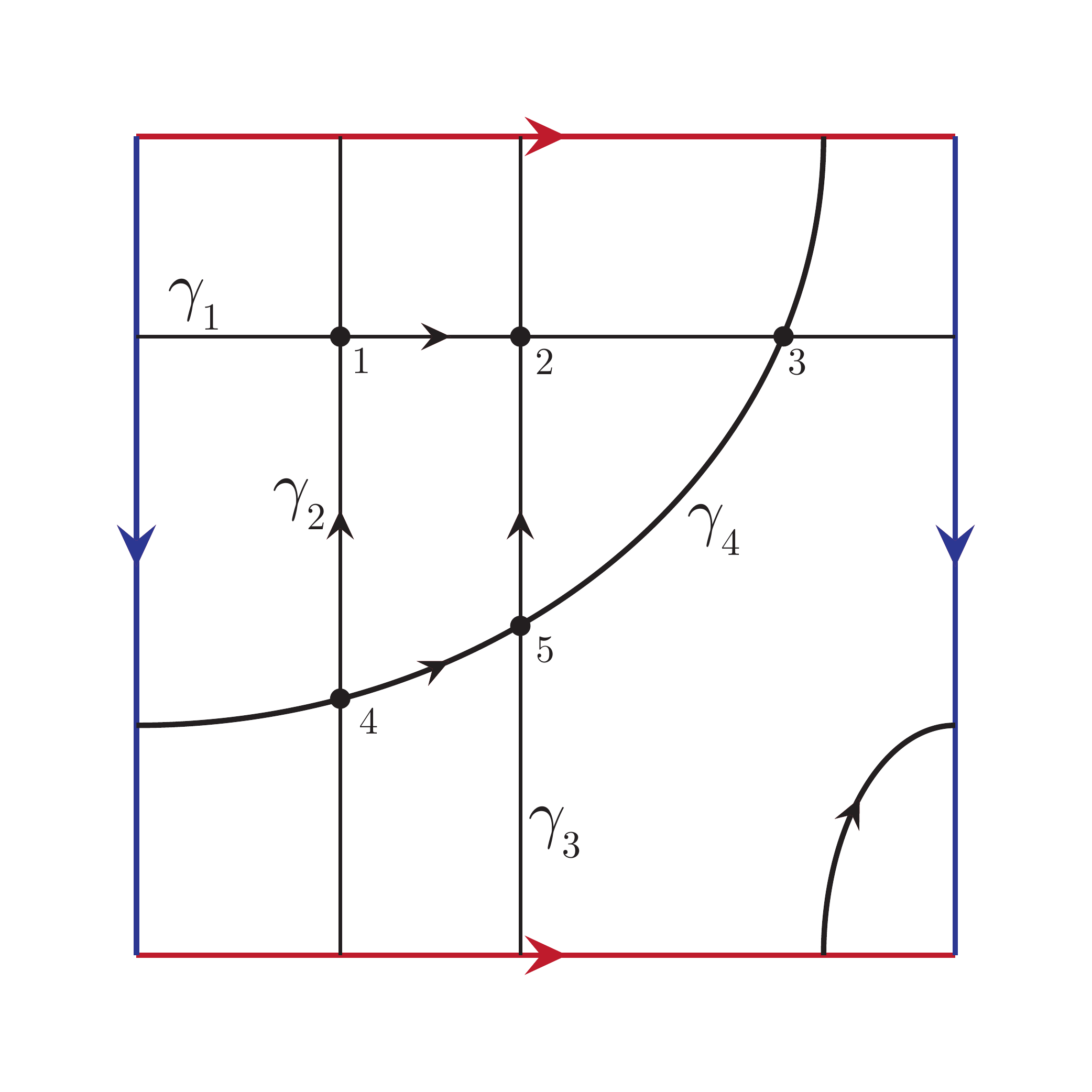}
\caption{An example of a curve system on a torus}
\label{curve_ex}
\end{center}
\end{figure}

\begin{example}
\label{example:curvesystem}
Consider a torus, represented as a square with opposite edges identified.
Figure \ref{curve_ex} depicts a curve system on such a  torus consisting of four curves. Note that each of the system's intersection points
involves exactly two curves; we will explain the integer labels on these intersection points shortly.
\end{example}
%
%
%
Our first goal is a combinatorial description of the \emph{oriented} intersections between curves in a curve system. Label the intersection points of $\Gamma$ with distinct positive integers $k_1, \ldots, k_n$, $n \ge 1$. For each $\gamma_i \in \Gamma$, let 
\[
a_i = (a_i^1, a_i^2, \ldots, a_i^{m_i}), 
\]
$a_i^j \in \{\pm k_1, \pm k_2, \ldots, \pm k_n\},$ be the vector, ordered by the curve's orientation, of signed intersection points on $\gamma_i$. The sign of an entry is 
positive if the intersection 
 of $\gamma_i$ with the other curve at that point is positive, and negative otherwise. We identify these vectors up to cyclic permutation so that this
correspondence is well-defined.

In Example \ref{example:curvesystem}, the encoding of the system in Figure \ref{curve_ex} is
\begin{align*}
a_1 & =  (1,2,3) \\
a_2 & =  (-1,-4) \\
a_3 & = (-2,-5) \\
a_4 & = (-3, 4, 5).
\end{align*}

We have thus defined a procedure for describing a topological curve system $\Gamma$ on a surface $S$ by a choice of
intersection labels $k_1, \ldots, k_n$ and a list of integer vectors, which we call the \emph{encoding}
of the tuple $(S, \Gamma, (k_1, \ldots, k_n))$. 

Our next goal is to reverse this process. To this end, we  need define these lists of vectors as objects more precisely.

%
%
%
\begin{definition}
A \emph{combinatorial curve system} on a set of distinct positive integers $k_1, \ldots, k_n$, $n \ge 1$, is a list of vectors $A = (a_1, \ldots, a_m)$,
$m \ge 2$, with entries from the set $\{\pm k_1, \ldots, \pm k_n\}$ and identified up to cyclic permutation that satisfies the
following conditions:
\begin{enumerate}
\item Each element of $\{\pm k_1, \ldots, \pm k_n\}$ appears exactly once in some vector $a_i$, and
\item $+k_j$ and $-k_j$ never appear in the same vector.
\end{enumerate}
\end{definition}

\begin{example}
Let $S$ be a surface, $\Gamma$  a topological curve system on $S$, 
and $(k_1, \ldots, k_n)$  a list of positive
integer labels for intersection points of $\Gamma$. Then its encoding as described above is
 a combinatorial curve system $A$ on $k_1, \ldots, k_n$. 
\end{example}

Our next goal is to show that this process of encoding can be reversed in a natural way and thus to establish a one-to-one correspondence between topological and combinatorial curve systems up to suitable identifications.

%
\begin{theorem}
\label{bigone}
Let $A$ be a combinatorial curve system on $k_1, \ldots, k_n$ with $n \ge 1$. Then there exists a surface $S$ and a curve system 
$\Gamma$ on $S$ such that the encoding of $(S, \Gamma, (k_1, \ldots, k_n))$ is $A$. This surface and curve system are unique up to an isotopy preserving the curve system and the labeling of the intersection points.
\end{theorem}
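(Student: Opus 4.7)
The plan is to build $(S, \Gamma)$ by first realizing $A$ as an abstract $1$-complex with a $4$-valent graph structure at the intersection points, promoting that $1$-complex to a ribbon graph using the orientations of the curves and the intersection signs, and finally closing it off by capping the boundary components of a regular neighborhood by disks.

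\medskip

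\textbf{Existence.} For each vector $a_i=(a_i^1,\ldots,a_i^{m_i})$ I take an abstract oriented circle $\gamma_i$ with $m_i$ marked points labeled in cyclic order of the orientation by the entries of $a_i$. By condition (1) of the definition of a combinatorial curve system each $k_j$ occurs exactly twice in the disjoint union $\bigsqcup_i\gamma_i$, once as $+k_j$ and once as $-k_j$; I identify these two marked points into a single vertex $v_j$. The resulting $1$-complex $\Gamma$ is $4$-valent at each $v_j$, and by condition (2) the two curves meeting at $v_j$ are distinct, so no $\gamma_i$ has a self-intersection. I upgrade $\Gamma$ to a ribbon graph by prescribing, at each vertex $v_j$, the cyclic order of the four incident half-edges so that it matches the standard oriented local model of a transverse crossing of two oriented arcs whose signed intersection equals the sign recorded in the encoding. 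Thickening this ribbon graph yields an oriented surface with boundary; capping off each boundary circle by a disk produces a closed oriented surface $S$. Conditions (1) and (2) in the definition of a topological curve system are immediate, and condition (3) holds by construction. Finally, reading off the encoding of $(S,\Gamma,(k_1,\ldots,k_n))$ recovers $A$ verbatim.

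\medskip

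\textbf{Uniqueness.} Let $(S',\Gamma')$ be another realization with the same encoding. Each $\gamma_i'\subset\Gamma'$ is a simple oriented closed curve visiting the same cyclic sequence of signed intersection labels as $\gamma_i$, so there is a label-preserving homeomorphism $\gamma_i\to\gamma_i'$ for each $i$. At every intersection $v_j$ the local picture of two oriented transverse arcs with a prescribed sign is unique up to orientation-preserving homeomorphism of a small disk, so these circle homeomorphisms extend to a homeomorphism $N(\Gamma)\to N(\Gamma')$ of ribbon-graph neighborhoods. The components of $S\setminus\Gamma$ and of $S'\setminus\Gamma'$ are disks by condition (3) and are paired up by the induced bijection on boundary circles of the neighborhoods; Alexander's trick extends the neighborhood homeomorphism across each pair of disks. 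The resulting homeomorphism $S\to S'$ carries $\Gamma$ to $\Gamma'$ preserving all labels, and any two such extensions differ by an isotopy that preserves the curve system and its labeling, which is the desired uniqueness statement.

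\medskip

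\textbf{The main obstacle.} The delicate step is fixing once and for all how the sign of a transverse crossing translates into a rotation system at the corresponding vertex: only with a consistent convention do the combinatorial data produce a well-defined ribbon graph, and only then is the local orientation-preserving uniqueness at each vertex available for the gluing argument. Once that sign/rotation dictionary is nailed down, both the existence argument and the uniqueness argument reduce to standard ribbon-graph bookkeeping and the Alexander trick on the complementary disks.
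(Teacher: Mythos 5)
Your proof is correct, and it reaches the same surface as the paper does, but it packages the construction differently. The paper works ``faces first'': it introduces a formalism of \emph{borders} (the left and right sides of each oriented edge) together with an explicitly tabulated four-case \emph{successor} operation (``turn left at a vertex''), traces out the resulting loops, glues a disk onto each loop, and then identifies each border with its oppositely oriented partner; the surface structure is then verified by hand at interior, edge, and vertex points. You instead build the $4$-valent graph first, endow it with a rotation system at each vertex dictated by the orientations and intersection signs, thicken to an oriented surface with boundary, and cap the boundary circles. These are the same construction in two orders: the boundary circles of your thickened ribbon graph are exactly the paper's loops, and the paper's successor operation is exactly the face-tracing algorithm for your rotation system. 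What your route buys is brevity and access to standard ribbon-graph machinery; what the paper's route buys is complete explicitness at the one point you flag as ``the main obstacle'' --- the dictionary converting the sign stored at $\pm k_j$ into the cyclic order of the four half-edges is precisely the content of the paper's four-case successor definition, and if you wanted your argument to be self-contained you would still have to write that table down. On uniqueness your write-up actually does more than the paper: the paper essentially asserts uniqueness after the construction, whereas you give the standard argument (local uniqueness of a signed transverse crossing, extension over a regular neighborhood, Alexander's trick on the complementary disks). The only caveat there is the final sentence: two homeomorphisms $S\to S'$ carrying $\Gamma$ to $\Gamma'$ with the same labels differ by a self-homeomorphism of $(S,\Gamma)$ fixing the labels, and showing that such a self-homeomorphism is isotopic to the identity through label-preserving maps needs a word (e.g.\ it is determined up to isotopy on each edge and each complementary disk); this is routine but not literally ``any two such extensions differ by an isotopy.''
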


We first discuss the intuitive motivation for the construction behind this theorem before we proceed with its 
formal proof.

One way to understand a topological curve system is as a directed graph on the surface $S$ whose vertices correspond
to the intersection points of the system and whose edges correspond to the segments of the curves between intersections (see Figure \ref{graph_ex}). By
the definition of a topological curve system, this graph divides the surface into faces that are homeomorphic to disks. The
essence of the proof of Theorem \ref{bigone} is that a combinatorial curve system provides enough information to
reconstruct the boundary cycle of each face in counter-clockwise order. Gluing disks into the boundaries and identifying these faces along the appropriate edges
will reconstruct the original surface.

\begin{figure}[h]
\begin{center}
\includegraphics[width=0.6\linewidth]{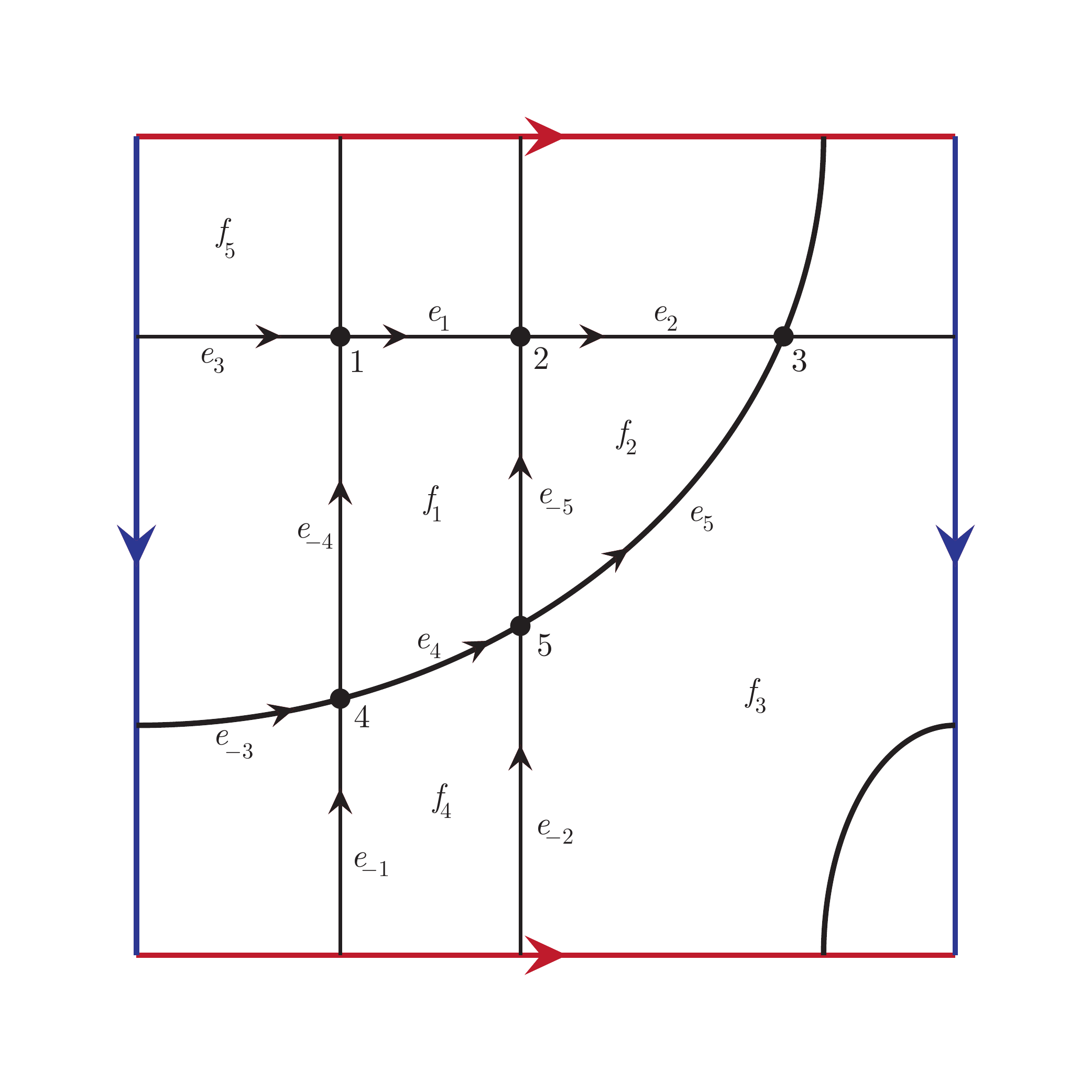}
\caption{The system from Figure \ref{curve_ex} as a graph with its vertices, edges, and faces labeled. Note that
some of the edges and faces continue over the identified edges of the square in this diagram.}
\label{graph_ex}
\end{center}
\end{figure}

To simplify the notation in the formal proof below, we introduce

\begin{notation}
If $a_i$ is a vector in a combinatorial curve system $A$, and $k$ is an integer in $a_i$, we denote the cyclic
predecessor of $k$ in $a_i$ by $k^-$ and the cyclic successor by $k^+$. For example, if $a_1 = (1, -2, 3)$ is a
vector  in some
combinatorial curve system $A$, then $1^+ = -2$, $1^- = 3$, $3^+ =  1$, and $3^- = -2$ in this system.
\end{notation}

Observe that if a vector has only of two entries like $a_2=(-1,-4)$, then $-4$ is both the successor and predecessor of $-1$.

In the course of this proof, we introduce the formalism of \emph{borders}. Borders can be thought of as the result of longitudinally cutting an oriented edge into its left and right halves (see Figure \ref{segs}).

More concretely:

\begin{definition}
Consider the oriented edge between
$k_i$ and its successor $k_i^+$. We associate to each such edge a left border, 
denoted as the (signed) pair $[k_i, k_i^+]$, and a right border, denoted $-[k_i, k_i^+]$.
\end{definition}

The reason for introducing the signed notation  $-[k_i, k_i^+]$ instead of reversing the order $[k_i^+,k_i]$ is due to the fact that vertices can be simultaneously successors and predecessors.

Let $\mathcal{S}$ be the set of all such disjoint borders. We also define a \emph{successor} operation on borders.

\begin{definition}

For each border in $\mathcal{S}$, we define its
\emph{successor} based on which of the following four forms the border takes; recall that $k_i$ denotes a positive integer.
\begin{itemize}
\item If a border is of the form $[k_i^-, k_i]$, then its successor is $[-k_i, (-k_i)^+]$.
\item If a border is of the form $[(-k_i)^-, -k_i]$, then its successor is $-[k_i^-, k_i]$.
\item If a border is of the form $-[k_i, k_i^+]$, then its successor is $-[(-k_i)^-,-k_i]$.
\item If a border is of the form $-[-k_i, (-k_i)^+]$, then its successor is $[k_i, k_i^+]$.
\end{itemize}

\end{definition}

\begin{example}
In Figure \ref{graph_ex}, the edge $e_2$ consists of the two borders  $[4, 5]$ and $-[4,5]$. The successor of $[4, 5] = [5^-, 5]$ is $[-5, (-5)^+] = [-5, -2]$ by the first case above,
while the successor of $-[4, 5] = -[4,4^+]$ is $-[(-4)^-,-4] = -[-1, -4]$ by the third case. The complexity of the notation is solely due to the fact that we need to keep track of the signs of the intersections
between the curves.

\end{example}

\begin{figure}[h]
\begin{center}
\includegraphics[width=0.5\linewidth]{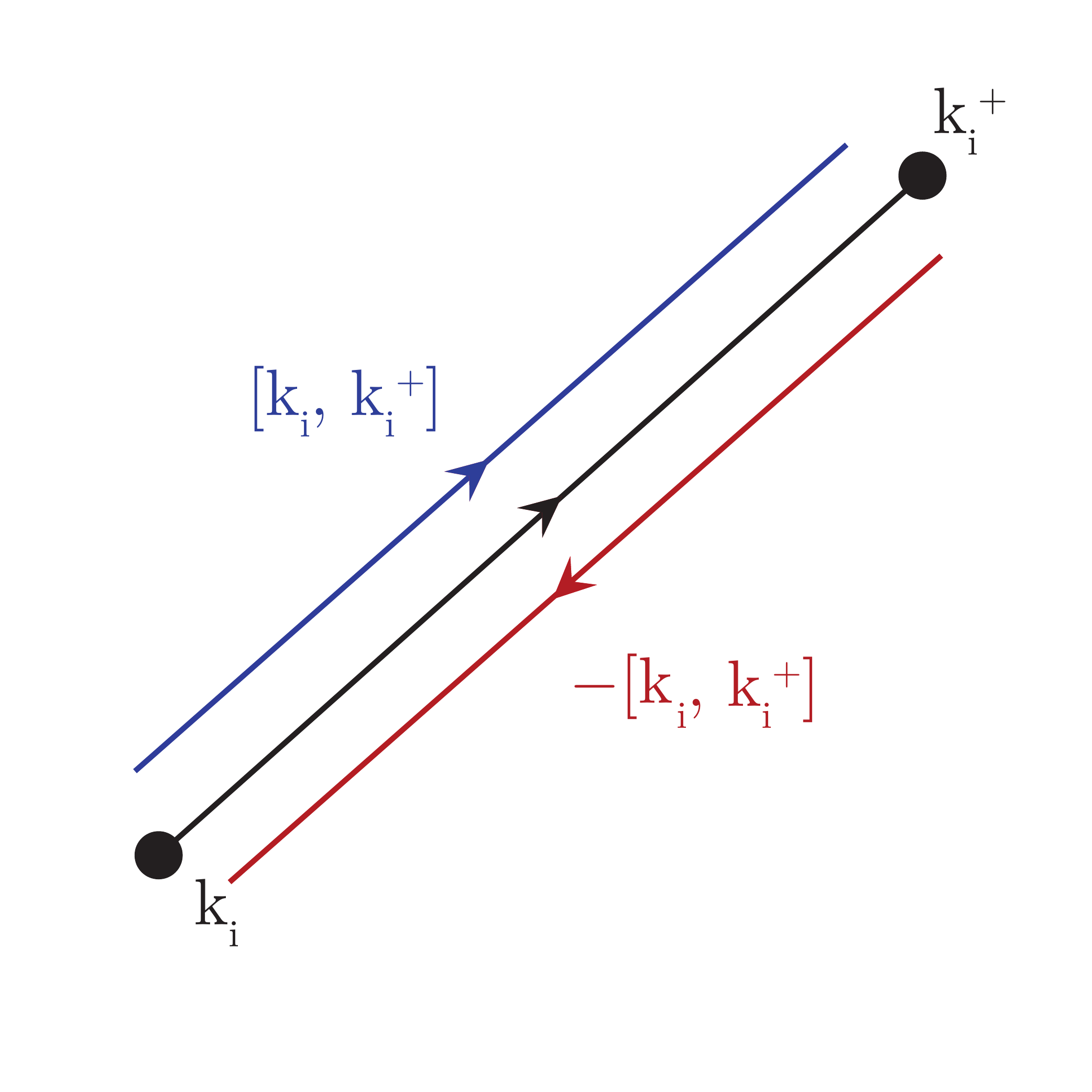}
\caption{Borders represent the left and right sides of an oriented edge.}
\label{segs}
\end{center}
\end{figure}

Note that each border also takes exactly one of the successor forms listed above, so we can also define
the \emph{predecessor} of a border as the inverse of the above operation. Intuitively, the successor of a border can be obtained by following a border to its second endpoint and then turning left at the vertex (see
Figure \ref{succ}).

\begin{figure}[h]
\begin{center}
\includegraphics[width=0.5\linewidth]{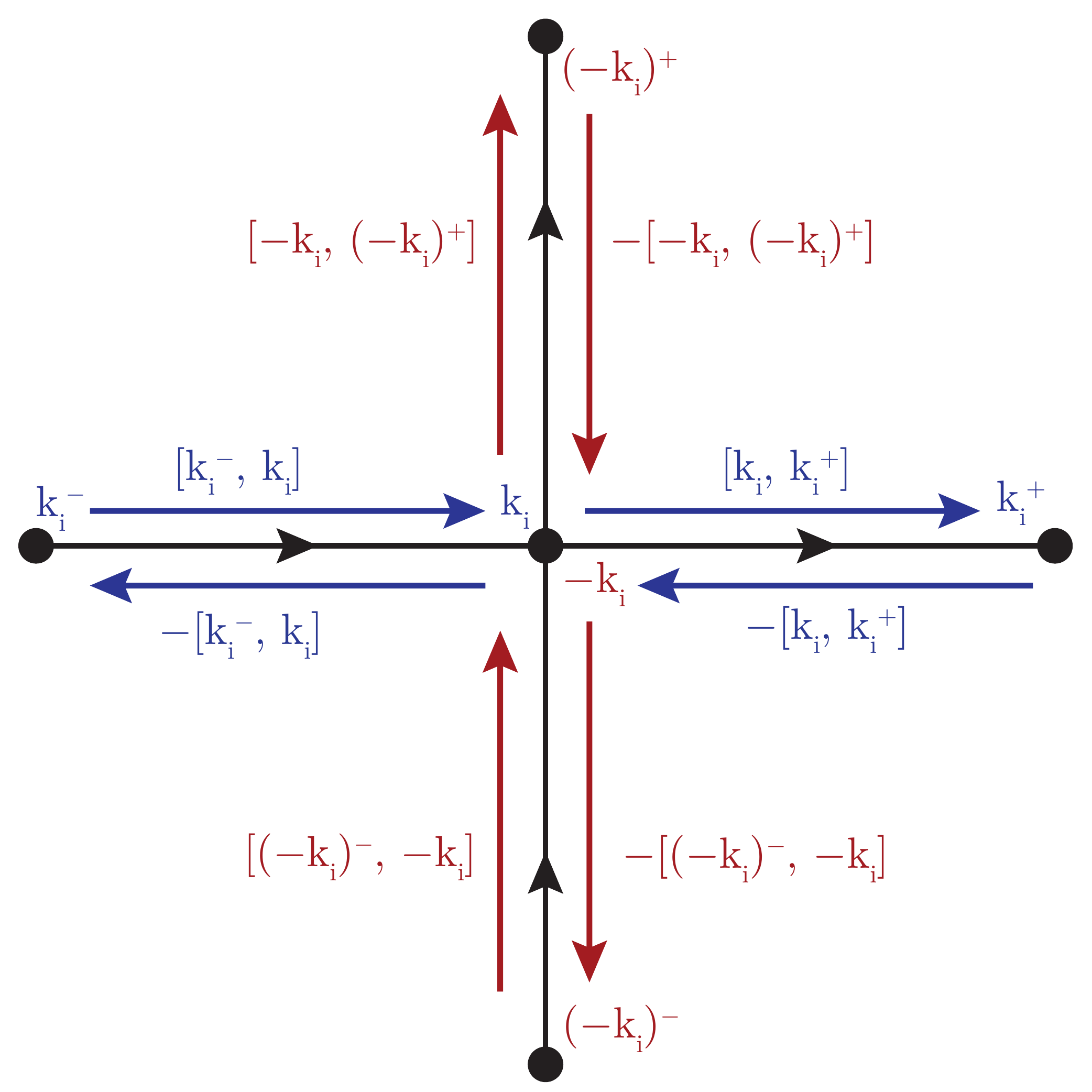}
\caption{The successor operation represents turning
left at a vertex.}
\label{succ}
\end{center}
\end{figure}

\begin{proof}(of Theorem \ref{bigone})
We begin by gluing the borders together to form disjoint oriented loops:

For each border $s \in \mathcal{S}$, denote by $[0,1]_s$ a copy of the unit interval, which we will identify with $s$ for the remainder of the proof.
Let \[\hat L = \bigcup_{s \in \mathcal{S}}[0,1]_s\] be the disjoint union of all borders, and let $L$ be the quotient space of $\hat L$ formed by 
identifying the $1$ endpoint of each border with the $0$ endpoint of its successor. The space $L$ is then
composed of a finite disjoint union of topological circles, which we call \emph{loops} (see Figure \ref{loops}). To see this, consider linking the
successors of a given border $s$. As there are only finitely many borders in $\mathcal{S}$, the end of some
successor border must eventually attach to the beginning of a border that is already in this loop. But since the 
predecessor of each border is unique, only two borders can meet at any one point. It follows that the loop must
close up at the beginning of the original border $s$ and hence is homeomorphic to a circle. Since every border
has a successor, these loops partition $L$.

\begin{figure}[h]
\begin{center}
\includegraphics[width=0.5\linewidth]{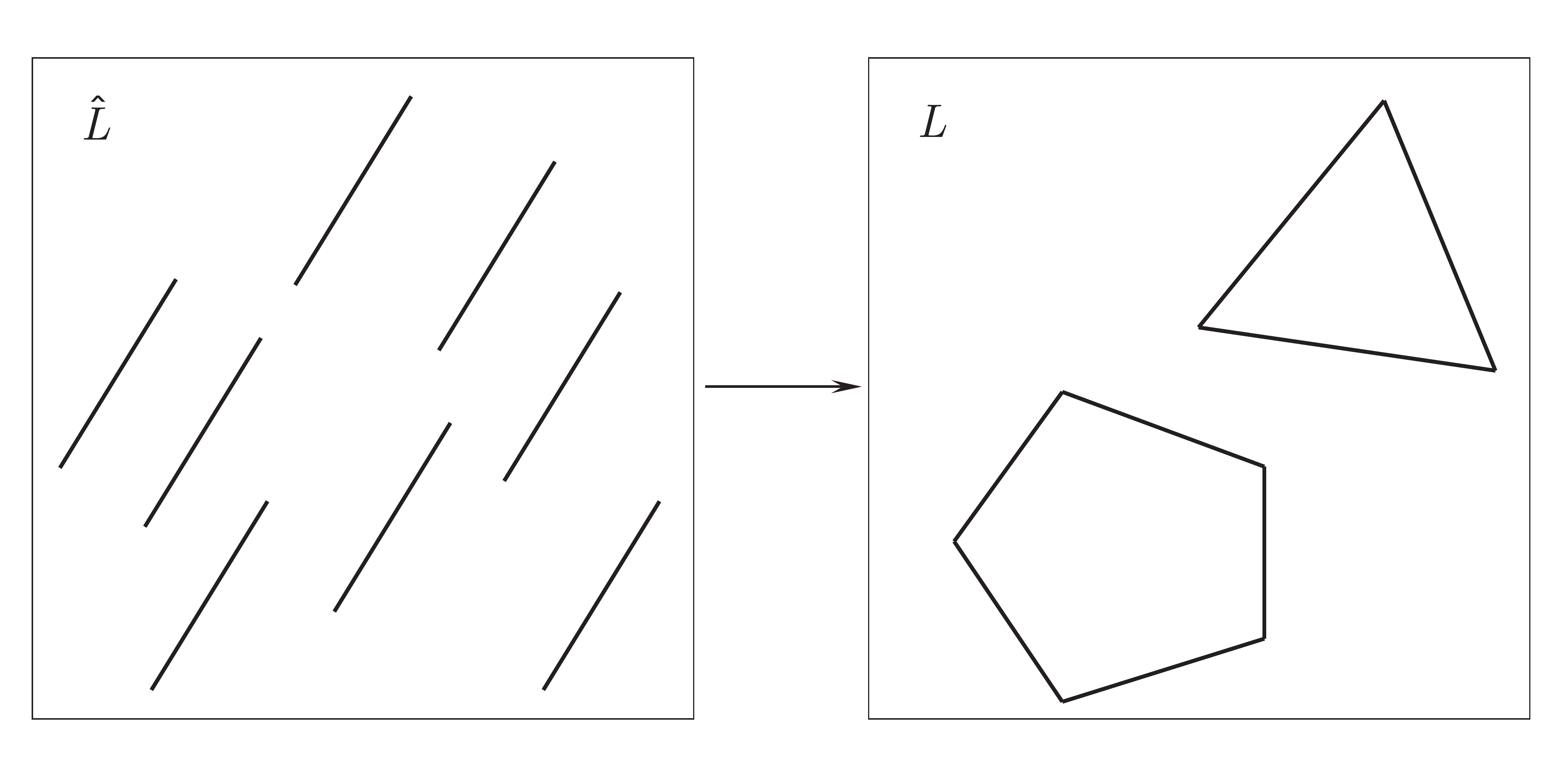}
\caption{The space $L$ is formed from $\hat L$ by connecting each segment to its successor.}
\label{loops}
\end{center}
\end{figure}

For each loop in $L$, associate as its \emph{face}  a closed oriented unit disk, and let $\hat F$ be the disjoint union of all
such faces. Define $F$ to be the quotient space of $L \cup \hat F$ obtained by identifying each loop with the 
boundary of its face (see Figure \ref{loop_fill}), respecting the orientations.

\begin{figure}[h]
\begin{center}
\includegraphics[width=0.5\linewidth]{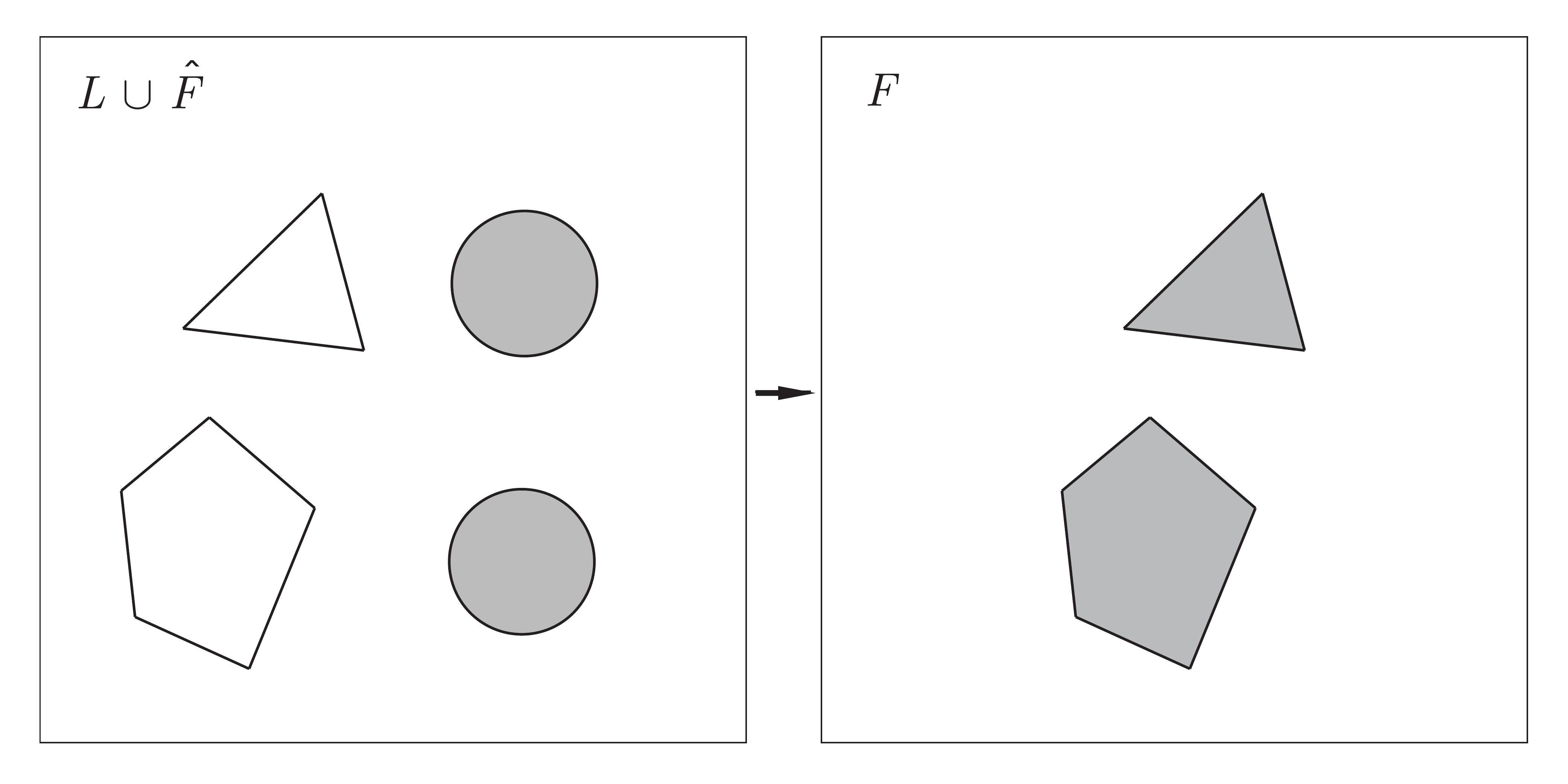}
\caption{$F$ is constructed from $L \cup \hat F$ by ``filling in" each loop with a closed disk.}
\label{loop_fill}
\end{center}
\end{figure}

Finally, we construct the surface $S$ by gluing the disks in $F$ together as follows: Let $S$ be the quotient  of $F$ obtained by identifying each border $[k, k^+]$ with its negative
counterpart $-[k, k^+]$ such that the $0$ endpoint of each border is attached to the $1$ endpoint of its
negative. In essence, we are joining the left and right sides of each edge with opposite orientation. 
Formally, we refer to the images of border pairs under
this quotient map as \emph{edges}. We denote the edge formed by the borders $[k, k^+]$ and $-[k, k^+]$ with $e_k$. 

We now show that $S$ is, in fact, a surface by exhibiting a neighborhood of each point in $S$ that is homeomorphic to an
open disk. By definition, points on the interior of some face have such a neighborhood. Now consider a point $p$ in the interior of
some edge. On each bordering face, the point is contained in a half-disk; in the quotient, these two half-disks meet to 
form an open disk around $p$. Finally, consider a point $p$ at the intersection of edges. In each of the four adjacent
faces, $p$ is contained in a sector of an open disk; in the quotient, these sectors meet along their edges to form an
open disk around $p$. The latter two cases are illustrated in Figure \ref{disks}.

\begin{figure}[h]
\begin{center}
\includegraphics[width=0.4\linewidth]{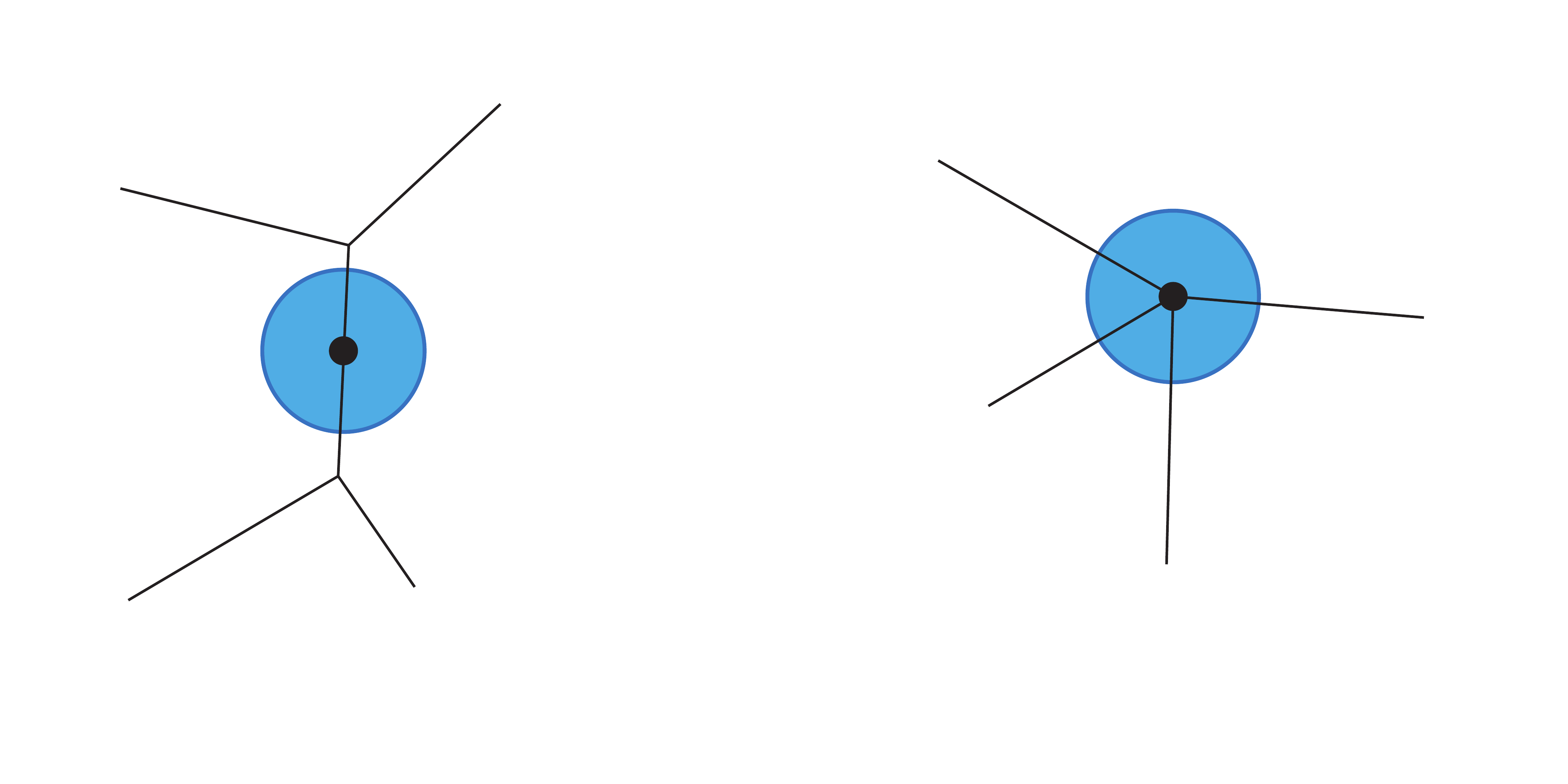}
\caption{The two cases of points on the boundary of some face. In both cases, the neighborhoods of the point
in each face meet to form an open disk in the quotient.}
\label{disks}
\end{center}
\end{figure}

By construction,  $S$ is a closed oriented surface.

Finally, we construct the curve system $\Gamma = \{\gamma_1, \ldots, \gamma_n\}$ on $S$. For each vector $a_i = (a_i^1, \ldots, a_i^{m_i})$ in $A$,
let $\gamma_i$ consist of the edges $e_{a_i^j}$ oriented so that $\gamma_i$ traces these edges in the cyclic order
specified by $a_i$, that is, in the order $e_{a_i^1}$, $e_{a_i^2}$, $e_{a_i^3}$, and so on. If $k \in a_i$, then $-k \in a_j$ for some $i \ne j$, and hence $\gamma_i$ intersects $\gamma_j$; thus $\Gamma$
satisfies condition 1 of Definition \ref{curve_sys}. Condition 2 is guaranteed by the requirement that if $k$ appears in some $a_i$, then
$-k$ is not in $a_i$. Finally, condition 3 follows from the construction of $S$ from closed disks and that the curves in $\Gamma$ comprise
the boundaries of these disks. By construction, $A$ is the encoding of $(S, \Gamma, (k_1, \ldots, k_n))$.

The fact that a curve system allows to reconstruct the surface allows us to associate the topological invariants of the surface to the curves system:

\begin{definition}
We say that a curve system $\Gamma$ has genus $g$ if it defines a  surface $S$ of genus $g$.
\end{definition}

\end{proof}

\begin{example}
Consider the combinatorial curve system
\begin{align*}
a_1 & =  (1,4,3) \\
a_2 & =  (-1,-2) \\
a_3 & = (-4,-5) \\
a_4 & = (2, -3, 5).
\end{align*}
Then  the above proof produces just one disk with 20 borders as shown in Figure \ref{figure:g3example}. The identifications of the borders defines a genus 3 surface.

\begin{figure}[h]
\begin{center}
\includegraphics[width=0.7\linewidth]{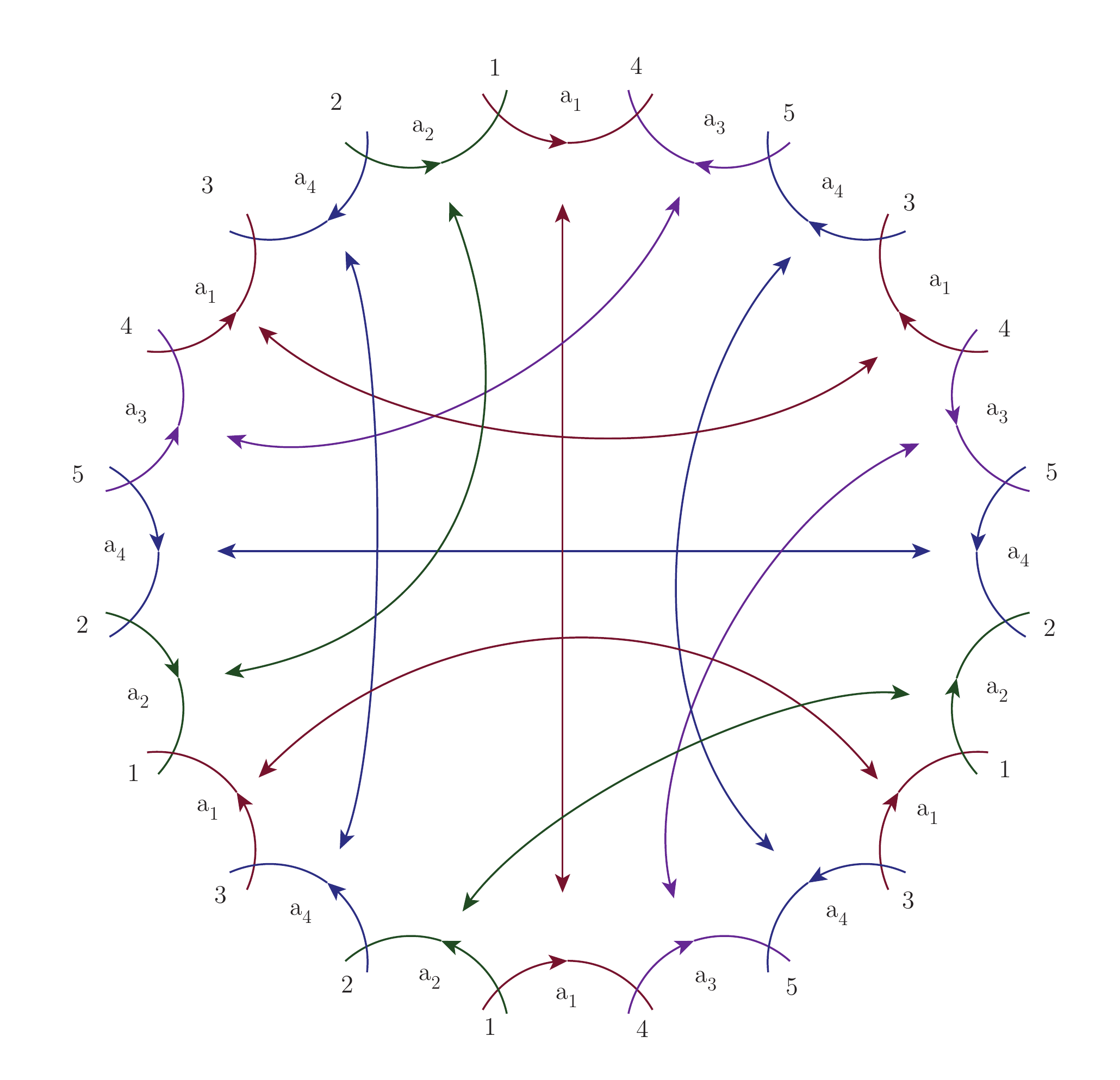}
\caption{Genus 3 surface reconstructed from a curve system}
\label{figure:g3example}
\end{center}
\end{figure}

\end{example}

\section{Periodic Tilings with Parallelograms and Curve Systems}\label{sec:geometric}

In this section, we will introduce a canonical  curve system for a periodic tiling of the plane by marked (or colored) parallelograms.
We assume that this tiling is edge to edge. The set  $\Lambda$ of translations that leave the marked tiling invariant forms a lattice, and the quotient $S=\R^2/\Lambda$ is a torus.

We introduce a curve system on the torus as follows (see Figure \ref{figure:zones}): Pick an arbitrary parallelogram of the tiling, and choose one of its edges. Draw a segment from the midpoint of this edge to the midpoint of the opposite edge. Then keep going, connecting midpoints of edges with midpoint of opposite edges of adjacent parallelograms. The resulting polygonal arc $\gamma$ will eventually close up on $S$. As $\gamma$ intersects only edges parallel to the first edge, the curve $\gamma$ is necessarily simple on $S$. It cannot be null homotopic on $S$: Otherwise it would lift to a  closed curve in $\R^2$, contradicting that $\gamma$ intersects only parallel edges.

\begin{figure}[h]
\begin{center}
\includegraphics[width=0.4\linewidth]{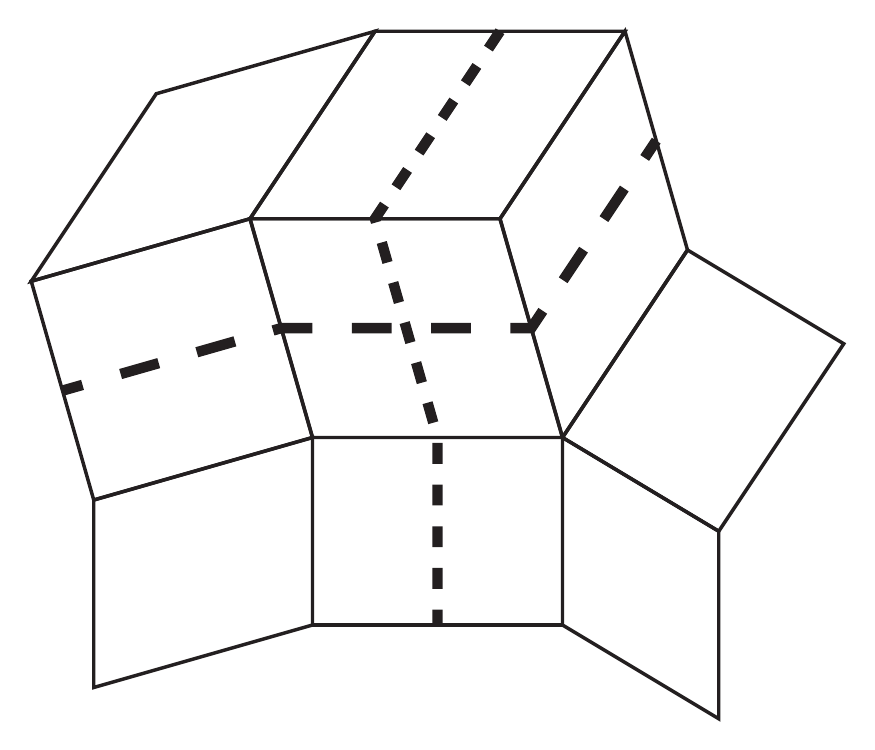}
\caption{A partial zone system for a  tiling with parallelograms}
\label{figure:zones}
\end{center}
\end{figure}

Carrying out this construction for all edges of the parallelogram tiling results in a finite system $\Gamma$ of simple closed curves on $S$.
We choose an arbitrary orientation for each of these curves.

This curve system satisfies the condition of Definition \ref{curve_sys}, and is thus a topological curve system in our sense, which we call the \emph{zone system} of the periodic tiling.

Not all topological curve systems on tori arise this way. We prove:

\begin{theorem}
\label{thm:periodic}
A topological curve system on a torus is the zone system of a periodic tiling if and only if no curve from the curve system bounds a disk, and no two curves from the curve system intersect such that  the segments between two intersections bound a disk. We call a curve system with these properties an \emph{essential} curve system.
\end{theorem}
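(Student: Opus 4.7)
The plan is to handle the two implications of the biconditional separately, with most of the work in the ``if'' direction.

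For the ``only if'' direction, I would proceed by contradiction assuming $\Gamma$ is the zone system of a periodic tiling. The paragraph preceding the theorem already shows that no zone curve is null-homotopic: its lift to $\R^2$ is a proper zigzag crossing edges of a fixed direction, hence unbounded. To rule out bigons, suppose two zone curves $\gamma_i, \gamma_j$ bound a bigon with intersection points $p, q$ and bigon arcs $\alpha_i \subset \gamma_i$, $\alpha_j \subset \gamma_j$. Under the natural correspondence between the zone arrangement and the tiling (complementary face $\leftrightarrow$ tiling vertex, arc $\leftrightarrow$ tiling edge, intersection point $\leftrightarrow$ parallelogram), the bigon becomes a single tiling vertex $v$ whose only two incident edges are $\alpha_i$ and $\alpha_j$, while $p$ and $q$ correspond to two parallelograms $P_p, P_q$. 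Both $P_p$ and $P_q$ then have $v$ as a corner with $\alpha_i, \alpha_j$ as the pair of sides incident to $v$. Since a parallelogram is determined by a corner together with its two adjacent sides, we get $P_p = P_q$, contradicting $p \ne q$.

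For the ``if'' direction, given an essential $\Gamma = (\gamma_1, \ldots, \gamma_n)$, I would construct the tiling by a dual-cell construction. The curve system $\Gamma$ induces a cell structure on $T^2$ with $0$-cells at intersections of $\Gamma$, $1$-cells along the arcs between them, and $2$-cells the complementary disks. Let $C^*$ be its dual: $0$-cells for complementary disks, $1$-cells transverse to arcs of $\Gamma$, and $2$-cells around each intersection. Because each intersection is a transverse double point of two curves, each $2$-cell of $C^*$ is combinatorially a quadrilateral whose two opposite pairs of edges cross arcs of $\gamma_i$ and $\gamma_j$ respectively. The essential hypotheses are exactly what forces the $0$-cells of $C^*$ to have degree at least $3$: the no-bigon condition prevents degree-$2$ vertices of $C^*$, and the no-null-homotopic-curve condition prevents the analogous global degeneracy where a lifted zone curve would close up rather than be a proper line.

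To realize $C^*$ as a parallelogram tiling, I would assign each curve $\gamma_i$ a nonzero vector $v_i \in \C$, chosen so that $v_i$ and $v_j$ are $\R$-linearly independent whenever $\gamma_i$ and $\gamma_j$ intersect, and give each $1$-cell crossing an arc of $\gamma_i$ the vector $\pm v_i$ (sign by orientation). Each $2$-cell of $C^*$ then has opposite sides $\pm v_i$ and $\pm v_j$, making it a non-degenerate parallelogram. Picking a basepoint $0$-cell and placing it at the origin, the position of every other $0$-cell is determined by summing edge vectors along a path; this is well-defined because each $2$-cell boundary cycle sums to $v_i + v_j - v_i - v_j = 0$. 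Passing to the universal cover $\R^2$, the monodromy on $\pi_1(T^2) = \Z^2$ produces a lattice $\Lambda$, and one obtains a candidate periodic parallelogram tiling of $\R^2$ with torus quotient $T^2/\Lambda$ identified with the original torus carrying $\Gamma$.

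The main obstacle is showing the developing map is an actual embedding, so that the parallelograms genuinely tile $\R^2$ without overlap. Locally well-shaped parallelograms could in principle fail to fit together globally if vertex angles do not sum to $2\pi$, so the task is to exhibit vector data $\{v_i\}$ for which the developing map becomes a homeomorphism $\R^2 \to \R^2$. I expect this to be the content of the ``necessary and sufficient condition for edge data'' alluded to in the section introduction: one characterizes the valid choices of $v_i$ by an explicit closure condition and then shows this condition can always be satisfied under the essential hypotheses. Once a valid realization is produced, its zone system is $\Gamma$ by construction, completing the proof.
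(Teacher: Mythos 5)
Your ``only if'' direction is essentially sound: the null-homotopy claim is the paper's own lifting argument, and your duality argument for bigons (a bigon face is a degree-$2$ tiling vertex, forcing two distinct parallelograms to coincide) is a nice alternative to the paper's route. But as written it only applies when the bigon is a single complementary face of the arrangement; a bigon whose interior meets other curves does not correspond to a single tiling vertex, so you would need an innermost-bigon reduction, or the paper's argument: every intersection of two fixed zone curves has the same sign (because $\gamma'(t)\cdot T_\gamma>0$ along each zone curve), whereas the two corners of any embedded bigon have opposite signs.

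The genuine gap is in the ``if'' direction, which is the substantive half of the theorem and which your proposal sets up correctly but then defers. Two things are missing. First, $\R$-linear independence of $v_i$ and $v_j$ at each intersection is not the right admissibility condition: you need the sign condition $\operatorname{sgn}\det_{\R}(v_i,v_j)=\operatorname{sgn}(\gamma_i\cdot\gamma_j)$, which guarantees that every parallelogram is positively oriented and hence that each corner angle at a tiling vertex lies in $(0,\pi)$. Without it the developed picture can fold over itself and no basepoint choice rescues the developing map. With it, the total cone angle $\phi_v$ at each vertex is forced to be a positive integer multiple of $2\pi$ (the edge directions must return to themselves after one circuit of the vertex), and Gauss--Bonnet on the torus, $\sum_v(2\pi-\phi_v)=0$, then forces $\phi_v=2\pi$ for every $v$; this is exactly where essentiality enters, since a bigon face would give a degree-$2$ vertex whose two corner angles each lie in $(0,\pi)$ and so cannot sum to a positive multiple of $2\pi$. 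Second, you never exhibit vectors satisfying this condition; the paper does so by setting $v_i=R\cdot T_{\gamma_i}$, where $T_{\gamma_i}$ is the zone vector determined by the homology class of $\gamma_i$ in a chosen lattice basis and $R$ is any rotation, using that $\det(T_{\gamma_i},T_{\gamma_j})$ already has the sign of the intersection number. Writing that you ``expect this to be the content of the necessary and sufficient condition alluded to in the section introduction'' leaves precisely these steps, and hence the existence statement, unproved.
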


We divide the proof into a few simple lemmas:

Let $\Gamma$ be the zone system of a periodic tiling.
Every curve from the zone system  lifts to a quasigeodesic in $\R^2$, i.e. stays at bounded distance from a line. Moreover, the direction of this line is uniquely determined. We can also find this line by looking at the homotopy  class
of the curve on the torus,  representing it by a closed geodesic, and lifting it to $\R^2$.
For a curve $\gamma$ from the curve system, we denote by $T_\gamma$ the  tangent vector of this line of the same length as the length of the closed geodesic. The orientation of this vector, which we call the \emph{zone vector} of $\gamma$, is determined, as we have chosen orientations for all zone curves.

\begin{lemma}\label{lemma:zonevec} The zone vectors $T_\gamma$ of a periodic tiling are already determined by the period lattice and the encoding of the zone system of the tiling.
\end{lemma}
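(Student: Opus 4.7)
The plan is to reduce the lemma to a topological statement about homotopy classes. The key observation, built into the definition preceding the lemma, is that $T_\gamma \in \Lambda$ is exactly the element of the period lattice corresponding to the free homotopy class of $\gamma$ under the canonical isomorphism $\pi_1(S) \cong \Lambda$ coming from the universal cover $\R^2 \to S = \R^2/\Lambda$: a lift of $\gamma$ is a quasi-geodesic whose displacement after one period is simultaneously $T_\gamma$ and the lattice element representing $[\gamma]$. Hence the lemma reduces to showing that the encoding, together with the lattice, determines each $[\gamma_i]\in\Lambda$.

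To extract $[\gamma_i]$ from the encoding, I would use the explicit displacement formula
\[
T_i \;=\; \sum_{j}\, I_{ij}\, \vec{a}_j,
\]
where $\vec{a}_j \in \R^2$ is the common vector of all tiling edges crossed by $\gamma_j$ and $I_{ij}$ is the algebraic intersection number of $\gamma_i$ with $\gamma_j$. The geometric input is that whenever $\gamma_i$ passes through a parallelogram whose two pairs of opposite sides are crossed by $\gamma_i$ and $\gamma_j$ respectively, the midpoint-to-midpoint displacement of $\gamma_i$ across this parallelogram equals $\pm \vec{a}_j$, with sign matching the local sign of the crossing at that parallelogram; summing over one period of $\gamma_i$ and grouping by the other curve gives the displayed formula. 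Both the intersection numbers $I_{ij}$ and the incidence pattern are manifestly determined by the encoding.

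The main step is then to pin down the $T_i$'s (and the auxiliary $\vec{a}_j$'s) as actual elements of $\R^2$, not merely up to an automorphism of $\Lambda$. I would use that each $T_i$ is a primitive element of $\Lambda$ (because $\gamma_i$ is a simple closed curve on $S$), that the collection $\{T_i\}$ generates $\Lambda$ (since the tiling really has period lattice exactly $\Lambda$), and that the cyclic intersection data in the encoding, together with the face relations of the CW decomposition furnished by Theorem \ref{bigone}, rigidify the linear system to an essentially unique solution in $\Lambda$. This last rigidity step is where the encoding does its real work, and I expect it to be the most delicate point of the proof.
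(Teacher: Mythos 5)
Your opening paragraph is essentially the paper's entire proof, and you should have stopped there: on the torus the free homotopy class of $\gamma$ is its homology class, $H_1(\R^2/\Lambda;\Z)\cong\Lambda$ canonically, and $T_\gamma$ is the image of $[\gamma]$ under this isomorphism. Since the encoding determines the curve system up to isotopy (Theorem \ref{bigone}) and isotopy preserves homology, the classes $[\gamma_i]\in\Lambda$, hence the vectors $T_{\gamma_i}$, carry no geometric information beyond the lattice. This is exactly what the paper does: write $\gamma\sim a_\gamma\alpha+b_\gamma\beta$ for the homology basis corresponding to a lattice basis $A,B$ and set $T_\gamma=a_\gamma A+b_\gamma B$.

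The rest of your argument opens a genuine gap. The displacement formula $T_i=\sum_j I_{ij}\vec a_j$ is correct (it is Lemma \ref{lemma:develop}, $z=Ce$), but it expresses the zone vectors through the edge vectors $\vec a_j$ --- precisely the geometric data the lemma asserts you do not need --- so you are forced into the ``rigidity step,'' which you do not carry out and which cannot be carried out in the form you need. The system $T=Ce$ is hugely underdetermined in $e$ (the matrix $C$ has rank $2$), and the constraints you invoke (primitivity of each $T_i$, generation of $\Lambda$, the cyclic data) are invariant under every orientation-preserving automorphism of $\Lambda$: applying $\phi\in SL(2,\Z)$ to all $T_i$ and $\vec a_j$ produces another admissible solution. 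Concretely, the abstract encoding $(1),(-1)$ is realized both by curves in classes $(\alpha,\beta)$ and by curves in classes $(\alpha+\beta,\beta)$, which give different zone vectors for the same lattice. So the abstract combinatorial encoding alone pins down the $T_i$ only up to $SL(2,\Z)$; the lemma is really about the encoding of the curves as they actually sit on $\R^2/\Lambda$, in which case the homology classes are already given and the reduction in your first paragraph finishes the proof with no rigidity argument at all.
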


\begin{proof}
Pick a basis $A$, $B$ of the period lattice of the periodic tiling, and denote the corresponding homology classes on the quotient torus by $\alpha$ and $\beta$. Then $\alpha$ and $\beta$ form a homology basis of the torus, and each zone curve $\gamma$ is homologous to an integral linear combination $a_\gamma\alpha+b_\gamma\beta$. Then $T_\gamma=a_\gamma A+b_\gamma B$ is the zone vector.
\end{proof}

\begin{lemma}
For any zone curve $\gamma$, we have that the inner product $\gamma'(t) \cdot T_\gamma >0$. In particular, if two zone curves intersect multiple times, they do so everywhere with the same sign. 
\end{lemma}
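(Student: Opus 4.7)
The plan is to lift $\gamma$ to a piecewise-linear curve $\tilde\gamma \subset \R^2$. By construction of the zone system, each maximal linear piece of $\tilde\gamma$ lies inside some parallelogram $P_k$ of the tiling and runs between midpoints of the two edges of $P_k$ parallel to the fixed direction $v$ of edges crossed by $\gamma$. Hence on the $k$-th piece, $\gamma'(t)$ is a positive scalar multiple of the transverse edge vector $u_k$ of $P_k$, oriented to match the orientation of $\gamma$. Because $\tilde\gamma$ translates by exactly $T_\gamma$ after one period of $\gamma$, one has $\sum_k u_k = T_\gamma$, the sum ranging over the segments in a single period.

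The heart of the argument is to show that each individual summand satisfies $u_k \cdot T_\gamma > 0$. I would argue this by exploiting that $\tilde\gamma$ is a simple quasigeodesic at bounded distance from the line $\R T_\gamma$: the $T_\gamma$-coordinate along $\tilde\gamma$ should be strictly increasing from one crossed midpoint to the next. Indeed, if some $u_k$ had $u_k \cdot T_\gamma \le 0$, then the lift would ``backtrack'' on that segment; combined with the bounded perpendicular displacement and simplicity of $\tilde\gamma$, this would force either a self-intersection of $\tilde\gamma$ or a sub-loop of $\gamma$ closing up before one full period, contradicting simplicity of the zone curve.

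For the intersection-sign consequence, let $p$ be any crossing point of two zone curves $\gamma_i$ and $\gamma_j$. Since $\gamma_i$ crosses only $v_i$-edges and $\gamma_j$ only $v_j$-edges, the parallelogram containing $p$ must have edges parallel to both $v_i$ and $v_j$; hence $\gamma_i'(p)$ is parallel to $v_j$ and $\gamma_j'(p)$ is parallel to $v_i$. The first part of the lemma pins down the signs: $\gamma_i'(p)= s_i\,v_j$ with $s_i = \operatorname{sign}(v_j \cdot T_{\gamma_i})$, and likewise $\gamma_j'(p) = s_j\,v_i$ with $s_j = \operatorname{sign}(v_i \cdot T_{\gamma_j})$. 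Both $s_i$ and $s_j$ depend only on the pair $(\gamma_i, \gamma_j)$, so $\operatorname{sign}\det(\gamma_i'(p), \gamma_j'(p)) = s_i s_j\,\operatorname{sign}\det(v_j, v_i)$ is independent of $p$; all intersections of $\gamma_i$ with $\gamma_j$ thus carry the same sign.

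The main obstacle is upgrading the obvious average positivity $\sum_k u_k \cdot T_\gamma = |T_\gamma|^2 > 0$ to pointwise positivity of each $u_k \cdot T_\gamma$. Just knowing that the segments sum to $T_\gamma$ does not rule out an individual $u_k$ pointing against $T_\gamma$; one must also exploit that $\tilde\gamma$ is embedded, sits inside an edge-to-edge parallelogram tiling, and closes up only after one full period. I expect the cleanest route is a minimality/extremality argument on the $T_\gamma$-projection of consecutive midpoints, combined with the edge-to-edge condition to rule out the ``backtracking'' configuration.
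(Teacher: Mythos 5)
Your handling of the ``same sign at every intersection'' consequence is fine and coincides with the paper's: at a crossing of $\gamma_i$ and $\gamma_j$ the two tangents are parallel to $v_j$ and $v_i$ respectively, with signs pinned down by the first part, so $\det(\gamma_i'(p),\gamma_j'(p))$ is independent of $p$. The problem is the first part, where you have a genuine gap and, moreover, are aiming at the wrong target. Your plan is to upgrade the average positivity $\sum_k u_k\cdot T_\gamma=|T_\gamma|^2>0$ to pointwise positivity $u_k\cdot T_\gamma>0$ by a backtracking/self-intersection argument using embeddedness and the quasigeodesic property; you acknowledge yourself that this step is not done (``I expect the cleanest route is\dots''). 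Worse, the literal pointwise inequality is not a consequence of the geometry: with the crossed edges horizontal, nothing prevents consecutive transverse edge vectors such as $u_1=(10,1)$ and $u_2=(-1,1)$, for which $T_\gamma=(9,2)$ and $u_2\cdot T_\gamma=-7<0$, even though the lift is embedded and monotone in height. So no embeddedness argument can deliver $u_k\cdot T_\gamma>0$ for every $k$. What is true, and what the sign conclusion actually requires, is that every $u_k$ lies in one fixed open half-plane bounded by $\R v$ (equivalently, $\det(v,\gamma'(t))$ has constant sign), with $T_\gamma$ lying in that same half-plane.

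The missing idea is local and elementary, and it is the entirety of the paper's proof: all edges crossed by $\gamma$ are parallel to the single direction $v$; each segment of $\tilde\gamma$ joins the midpoints of the two \emph{opposite} $v$-edges of one parallelogram, hence strictly increases or strictly decreases the coordinate transverse to $v$; and consecutive segments share their endpoint on a common $v$-edge of two adjacent tiles, so the sense of crossing (say, from below to above) propagates from each segment to the next. Therefore the transverse coordinate is strictly monotone along $\tilde\gamma$, all the $u_k$ lie in the open upper half-plane, and so does $T_\gamma=\sum_k u_k$. No appeal to simplicity, bounded distance from the axis, or a minimality argument is needed. (The paper's own phrasing of the conclusion as $\gamma'(t)\cdot T_\gamma>0$ is likewise only justified in this half-plane sense, which is all that is used later.) With this substitution your second and third paragraphs go through verbatim; without it, the proof is not complete.
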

\begin{proof}
To see this, note that the zone curve $\gamma$ is defined by intersecting edges that are all parallel to each other. If we assume without loss of generality, that these edges are all horizontal, then $\gamma$ would always point upward (say). The same must then hold for the quasigeodesic, and hence for $T_\gamma$.  At an intersection of two zone curves, the intersecting segments within the parallelogram are parallel to the edges. If the sign at two such intersections were different, two such segments belonging to the same zone curve would have opposite orientation, which is impossible as the sign of $\gamma'(t) \cdot T_\gamma$ has to remain the same.
\end{proof}

The next Lemma proves one direction of  Theorem \ref{thm:periodic}:

\begin{lemma} No curve from the curve system bounds a disk, and no two curves from the curve system intersect such that  the segments between two intersections bound a disk.
\end{lemma}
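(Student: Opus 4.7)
My plan is to dispose of both assertions with short contradiction arguments, reusing the two preceding lemmas about zone curves and their zone vectors.

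For the first clause I would simply refer back to the observation made when the zone system was introduced: any zone curve $\gamma$ lifts to a quasigeodesic in $\R^2$ that stays within bounded distance of an affine line, hence is unbounded and in particular not closed. Consequently $\gamma$ is not null-homotopic on $S$, and on an orientable surface a simple closed curve bounds an embedded disk if and only if it is null-homotopic, so $\gamma$ cannot bound a disk.

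For the second clause I would argue by contradiction. Suppose two distinct zone curves $\gamma_1,\gamma_2$ meet at two points $p,q$ such that the arcs $\alpha_1\subset\gamma_1$ and $\alpha_2\subset\gamma_2$ joining $p$ to $q$ cobound an embedded disk $D\subset S$ (a bigon). Then $\gamma_1$ can be isotoped across $D$ onto a small pushoff of $\alpha_2$, removing both intersection points $p$ and $q$ while leaving everything outside a neighborhood of $D$ unchanged. Since the algebraic intersection number $\gamma_1\cdot\gamma_2$ is invariant under isotopy, the signed contributions at $p$ and $q$ must cancel; that is, the intersection signs at $p$ and $q$ are opposite. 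This contradicts the immediately preceding lemma, which asserts that any two zone curves intersect with a single sign throughout their common points.

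The only step that requires any real care is the ``bigon implies opposite signs'' assertion, which I am using as standard surface topology via the isotopy-across-$D$ argument above. If a self-contained derivation is preferred, one can instead lift $D$ to a simply connected region $\widetilde D\subset\R^2$, note that $\partial\widetilde D$ consists of two quasigeodesic arcs running between lifts $\tilde p$ and $\tilde q$, and use the sign constraint $\gamma'(t)\cdot T_\gamma>0$ together with the orientation that $\R^2$ induces on $\partial\widetilde D$ to check directly that the local frames $(\gamma_1',\gamma_2')$ at the two corners of $\widetilde D$ have opposite determinants. Either way, the entire argument is a couple of lines; the subtlety is purely orientation bookkeeping, and no new geometric input beyond the two preceding lemmas is required.
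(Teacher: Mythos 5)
Your proof is correct and follows essentially the same route as the paper: for the first clause, both arguments lift to $\R^2$ and use the monotonicity/quasigeodesic property to see the lift cannot close up, and for the second clause, both derive that the two corners of a bigon would carry opposite intersection signs, contradicting the preceding constant-sign lemma. Your justification of the ``bigon implies opposite signs'' step is in fact slightly more careful than the paper's, which simply asserts it.
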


\begin{proof}
Suppose the contrary, and consider the lifted curve(s) in $\R^2$. Because disks lift to disks, segments of these curves would still bound disks in $\R^2$. For the sake of concreteness, assume that the first zone curve intersects vertical edges from the left to the right. As it proceeds monotonically to the right, it can never be closed. This proves the first claim.  For the second claim, we would obtain two consecutive intersections of the two zone curves with opposite sign, which is impossible by the previous Lemma.
\end{proof}

\begin{lemma}\label{lemma:nondegenerate}
For any parallelogram in a periodic tiling, the two zone curves from $\Gamma$ associated to the pairs of opposite edges of that parallelogram form a basis
of the rational homology of $T$. In other words, the corresponding zone vectors are linearly independent.
\end{lemma}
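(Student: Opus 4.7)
The plan is to deduce the linear independence of $T_{\gamma_1}$ and $T_{\gamma_2}$ from the non-vanishing of the algebraic intersection number $\gamma_1\cdot\gamma_2$ on the torus $T$. Lemma \ref{lemma:zonevec} exhibits $\gamma\mapsto T_\gamma$ as the linear extension of the isomorphism $a\alpha+b\beta\mapsto aA+bB$ from $H_1(T,\Q)$ to $\R^2$; since $A,B$ are linearly independent in $\R^2$, this map is injective, and linear independence of zone vectors is equivalent to linear independence of the corresponding homology classes. The problem thus reduces to a purely topological statement on $T$.

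The first step is to check that $\gamma_1$ and $\gamma_2$ both pass through the chosen parallelogram $P$ and in fact meet transversally at its center. By the very construction of the zone system, the segment of $\gamma_i$ inside $P$ joins the midpoints of one pair of opposite edges of $P$; a short computation shows that such a midpoint-to-midpoint chord is parallel to the \emph{other} pair of edges of $P$. Since the two edge directions of a non-degenerate parallelogram are linearly independent, the two segments cross transversally at the center of $P$, contributing $\pm 1$ to $\gamma_1\cdot\gamma_2$.

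Next, I would invoke the previous lemma, which asserts that any two zone curves intersect everywhere with a consistent sign. This upgrades the single contribution from the center of $P$ to the full algebraic intersection: every additional intersection of $\gamma_1$ with $\gamma_2$ contributes the \emph{same} sign, so no cancellation is possible and $\gamma_1\cdot\gamma_2\neq 0$. Since the intersection pairing on $H_1(T,\Q)\cong\Q^2$ is a non-degenerate alternating form, classes with nonzero pairing are linearly independent, so $[\gamma_1]$ and $[\gamma_2]$ are linearly independent, and hence so are $T_{\gamma_1}$ and $T_{\gamma_2}$.

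I do not anticipate any serious obstacle. The only non-formal input is the geometric observation that a chord joining midpoints of one pair of opposite edges of a parallelogram is parallel to the other pair; the rest is an application of the symplectic intersection pairing on $H_1$ of the torus together with the sign-consistency lemma and Lemma \ref{lemma:zonevec}.
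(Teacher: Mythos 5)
Your proposal is correct and follows essentially the same route as the paper: the authors also argue that the two zone curves intersect once inside the given parallelogram, that the sign-consistency lemma rules out a cancelling intersection, and hence that the nonzero intersection number forces linear independence in homology. You simply spell out in more detail the two steps the paper leaves implicit (the transversal crossing at the parallelogram's center and the reduction from zone vectors to homology classes via Lemma \ref{lemma:zonevec}).
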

\begin{proof}
We claim that the two curves have non-zero intersection number. First, they do intersect in the given parallelogram. If the intersection number was zero, there would be another intersection of the two curves with opposite sign, which is impossible. 
\end{proof}

To prove the other direction of Theorem \ref{thm:periodic}, we need to construct a periodic tiling with a given zone system. This will involve the assignment of geometric data to the tiling, which we will now describe.

We  assign geometric data to the zone curves of a periodic tiling as follows: Denote by $V_\gamma$ the edge vector of the parallelograms that are being intersected by the zone curve $\gamma$. We choose the orientation of $V_\gamma$ such that $\det(T_\gamma,V_\gamma)>0$. The set of vectors $V_\gamma$ is called the \emph{geometric data} of the tiling.

The edge vectors and zone vectors are related by a simple compatibility condition:

\begin{lemma}\label{lemma:compatability}
Let $\alpha$ and $\beta$ be two zone curves that intersect in a parallelogram. Then $\det(T_\alpha, T_\beta)$ and $\det(V_\alpha, V_\beta)$ have the same sign. Moreover, this sign is already determined by the encoding of the curve system and the orientation of the torus.
\end{lemma}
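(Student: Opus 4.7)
The plan is to show that both $\det(T_\alpha,T_\beta)$ and $\det(V_\alpha,V_\beta)$ equal, in sign, the common intersection sign $\epsilon\in\{\pm1\}$ of $\alpha\cap\beta$. By the preceding lemma $\epsilon$ is well-defined, and it is directly visible in the encoding (as the sign of the matching label $\pm k$ at the $\alpha$-$\beta$ intersection); the dependence on the torus orientation will enter only through the choice of a positively oriented period basis.

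For $\det(T_\alpha,T_\beta)$, I apply Lemma~\ref{lemma:zonevec}. Fix a basis $(A,B)$ of the period lattice with $\det(A,B)>0$. Writing $T_\gamma=a_\gamma A+b_\gamma B$ yields
\[
\det(T_\alpha,T_\beta)=(a_\alpha b_\beta-a_\beta b_\alpha)\det(A,B),
\]
and $a_\alpha b_\beta-a_\beta b_\alpha$ is the algebraic intersection number of $\alpha$ and $\beta$ on the torus. By the preceding common-sign lemma this equals $N\epsilon$ with $N\ge 1$, so the sign is exactly $\epsilon$.

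For $\det(V_\alpha,V_\beta)$, I work locally at the intersection point $p$ inside the parallelogram. The arc of $\alpha$ there joins the midpoints of its two $V_\alpha$-edges, so $\alpha'(p)=\sigma V_\beta$ for some $\sigma\in\{\pm1\}$; analogously $\beta'(p)=\tau V_\alpha$. The positivity $\gamma'\cdot T_\gamma>0$ from the previous lemma pins down $\sigma=\operatorname{sign}(V_\beta\cdot T_\alpha)$ and $\tau=\operatorname{sign}(V_\alpha\cdot T_\beta)$, and a direct computation gives
\[
\epsilon=\operatorname{sign}\det(\alpha'(p),\beta'(p))=-\sigma\tau\operatorname{sign}\det(V_\alpha,V_\beta),
\]
so the conclusion reduces to $\sigma\tau=-1$. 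The main obstacle is this last equality. Since the orientation convention $\det(T_\gamma,V_\gamma)>0$ means $V_\gamma$ is a CCW rotation of $T_\gamma$ by an angle $\phi_\gamma\in(0,\pi)$, expanding $V_\beta\cdot T_\alpha$ and $V_\alpha\cdot T_\beta$ in terms of $\phi_\alpha$, $\phi_\beta$, and the oriented angle from $T_\alpha$ to $T_\beta$ produces an explicit expression for $\sigma\tau$; I expect this, together with the agreement of local and global intersection signs forced by the common-sign lemma and the homology computation above, to pin down $\sigma\tau=-1$. Once in hand, $\operatorname{sign}\det(V_\alpha,V_\beta)=\epsilon$ follows and both determinants carry the sign $\epsilon$, which depends only on the encoding and the chosen orientation of the torus.
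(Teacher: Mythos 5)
Your first half (the sign of $\det(T_\alpha,T_\beta)$) is correct and is essentially the paper's own remark that this sign equals the sign of the algebraic intersection number once a positively oriented period basis is fixed. The problem is the second half. You correctly reduce $\operatorname{sign}\det(V_\alpha,V_\beta)=\epsilon$ to the identity $\sigma\tau=-1$, but you then only announce that an angle computation should "pin down" this identity --- and in fact it cannot, because the inputs you allow yourself ($\sigma=\operatorname{sign}(V_\beta\cdot T_\alpha)$, $\tau=\operatorname{sign}(V_\alpha\cdot T_\beta)$, and $\det(T_\gamma,V_\gamma)>0$, even supplemented by the consistency of local and global intersection signs) do not determine $\sigma\tau$. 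Concretely, put $T_\alpha$ at angle $0$, $T_\beta$ at angle $\psi=0.2\pi$, $V_\alpha$ at angle $\phi_\alpha=0.4\pi$ and $V_\beta$ at angle $\psi+\phi_\beta$ with $\phi_\beta=0.1\pi$. Then $\phi_\alpha,\phi_\beta\in(0,\pi)$, $\sigma=\operatorname{sign}\cos(\psi+\phi_\beta)=+1$, $\tau=\operatorname{sign}\cos(\phi_\alpha-\psi)=+1$, so $\sigma\tau=+1$; moreover $\det(T_\alpha,T_\beta)\propto\sin(0.2\pi)>0$ while $\det(V_\alpha,V_\beta)\propto\sin(-0.1\pi)<0$, and your two sign equations $\epsilon=\operatorname{sign}\det(T_\alpha,T_\beta)$ and $\epsilon=-\sigma\tau\operatorname{sign}\det(V_\alpha,V_\beta)$ are both satisfied with $\epsilon=+1$. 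So every constraint in your proposal holds, yet the conclusion of the lemma fails. This configuration is of course not realizable by an actual tiling, but nothing in your argument rules it out, so the step you flag as "the main obstacle" is a genuine gap, not a routine verification.

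The missing ingredient is a slightly stronger local fact than $\gamma'\cdot T_\gamma>0$, namely the one actually used inside the proof of that lemma: every segment of $\gamma$ crosses the mutually parallel $V_\gamma$-edges from the same side, so $\det(\gamma'(p),V_\gamma)$ has the same sign as $\det(T_\gamma,V_\gamma)$, i.e.\ is positive, at \emph{every} segment. Applied to $\alpha'(p)=\sigma V_\beta$ this gives $\sigma\det(V_\beta,V_\alpha)>0$, hence $\sigma=-\operatorname{sign}\det(V_\alpha,V_\beta)$; applied to $\beta'(p)=\tau V_\alpha$ it gives $\tau=+\operatorname{sign}\det(V_\alpha,V_\beta)$; hence $\sigma\tau=-1$ and your identity $\epsilon=-\sigma\tau\operatorname{sign}\det(V_\alpha,V_\beta)$ yields the claim. (In the numerical configuration above it is precisely this crossing condition for $\beta$ that is violated.) This is, in expanded form, what the paper compresses into the one-line assertion that $V_\alpha$ and $V_\beta$ "span a positively oriented parallelogram"; your local set-up at the intersection point is a reasonable way to make that precise, but as written the decisive inequality is hoped for rather than proved, and the computation you propose in its place would not deliver it.
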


\begin{proof} Assume without loss of generality that $\det(T_\alpha, T_\beta)>0$. Then $V_\alpha$ and $V_\beta$ span a positively oriented parallelogram, hence their determinant must be also positive. The zone vectors are determined by the encoding and a choice of an oriented homology basis with associated basis of the period lattice. Any other basis will differ by an affine transformation with positive determinant, thus leading to another set of zone vectors that are transformed using the same affine transformation. This clearly  doesn't affect the signs of the determinants $\det(T_\alpha, T_\beta)$. In fact, the sign of the determinant is the same as the sign of the intersection number of the zone curves.
\end{proof}

This condition is necessary for a set of  data $V_\gamma$ to be the geometric data of a tiling. It turns out that this condition is also sufficient:

\begin{theorem}\label{thm:compatability}
Given an essential curve system $\Gamma$ on an oriented  torus, and a
 set of edge vectors $V_\gamma$ assigned to all $\gamma\in\Gamma$ such that for any pair of intersecting curves $\alpha, \beta\in\Gamma$,   $\det(V_\alpha, V_\beta)$ has the same sign as the intersection number of $\alpha$ and $\beta$, there is a tiling  of the plane by parallelograms such that its zone system is $\Gamma$.
\end{theorem}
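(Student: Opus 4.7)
The plan is to construct the tiling by placing a Euclidean parallelogram at each intersection point of $\Gamma$ and gluing them along matching edges, using the CW structure on $S$ induced by $\Gamma$ (whose vertices are the intersection points, whose edges are arcs of $\Gamma$ between consecutive intersections, and whose $2$-cells are the components of $S\setminus\Gamma$, all disks by Definition~\ref{curve_sys}). For each zone vertex $v=\alpha\cap\beta$ I will build a closed parallelogram $P_v\subset\R^2$ with side vectors $V_\alpha$ and $V_\beta$; the hypothesis on the sign of $\det(V_\alpha,V_\beta)$ lets me orient $P_v$ so that the cyclic order of its four edges in $\R^2$ matches the cyclic order of the four zone edges emanating from $v$ in $S$.

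\textbf{Gluing.} Each zone edge on some curve $\gamma$ joins two zone vertices $v,v'$, and both $P_v$ and $P_{v'}$ carry a distinguished edge parallel to $V_\gamma$ of length $|V_\gamma|$; I glue these by translation. Performing all such identifications produces a compact oriented surface $S'$ equipped with a tiling by parallelograms in which all transition maps between adjacent parallelograms are translations of $\R^2$. Combinatorially $S'$ has the same CW structure as the zone-system complex on $S$: its tiling faces are the $P_v$, its tiling edges come from zone edges, and its tiling vertices arise one per zone face $f$ of $\Gamma$, from the collection of corners of the $P_{v_i}$ that bound $f$.

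\textbf{Angle sum at each vertex (main obstacle).} The key technical step is to verify that the Euclidean structure on $S'$ has no cone points, i.e.\ that the total angle at every tiling vertex equals $2\pi$. Let $w$ be such a vertex, sitting in the zone face $f$, whose boundary is a cyclic sequence of zone edges on curves $\gamma_1,\ldots,\gamma_k$ meeting at zone vertices $v_1,\ldots,v_k$. Each $P_{v_i}$ contributes the interior angle at the corner distinguished by the side on which $f$ lies; this corner is cut out by two edges of $P_{v_i}$ parallel to $V_{\gamma_{i-1}}$ and $V_{\gamma_i}$ with signs determined by the combinatorics of the gluing. The sign compatibility between $\det(V_\alpha,V_\beta)$ and the intersection signs ensures that the cyclic order of the four corners of each $P_{v_i}$ in $\R^2$ agrees with the cyclic order of the four sectors around $v_i$ in $S$. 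Consequently, the outgoing edges at $w$ obtained as one passes from $P_{v_i}$ to $P_{v_{i+1}}$ form a cyclic fan of rays $\pm V_{\gamma_i}$ whose angular turning matches the boundary walk around $\partial f$ in $S$; since $f$ is a disk and this walk winds once around $w$ in $S$, the corresponding fan in $\R^2$ winds once around $w$, forcing $\sum_i\theta_i=2\pi$.

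\textbf{Developing to the plane.} With $S'$ a genuine flat Euclidean torus tiled by parallelograms with translation holonomy, its universal cover $\tilde S'$ is isometric to $\R^2$, the deck group acts by translations, and the tiling lifts to a periodic tiling of $\R^2$ by parallelograms invariant under the resulting period lattice $\Lambda$. The orientation-preserving homeomorphism $S\to S'$ provided by the shared CW structure sends $\Gamma$ to the system of midpoint-to-midpoint curves of the new tiling, so $\Gamma$ is realized as the zone system of a periodic parallelogram tiling of the plane, as required.
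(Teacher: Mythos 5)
Your construction is the same as the paper's: one Euclidean parallelogram per intersection point, glued along the zone edges, after which the whole issue is to show the resulting flat metric has no cone points. The gap is in your treatment of exactly that issue. At a tiling vertex $w$ lying in a zone face $f$, the determinant hypothesis does give you (as you say) that each parallelogram corner contributes an angle $\theta_i\in(0,\pi)$ and that the cyclic fan of edge directions $\pm V_{\gamma_i}$ turns consistently counterclockwise. But from this you may only conclude that $\sum_i\theta_i$ is a \emph{positive integer multiple} of $2\pi$. Your claim that ``since this walk winds once around $w$ in $S$, the corresponding fan in $\R^2$ winds once around $w$'' is precisely the statement you need to prove, not a consequence of the combinatorics: the developing map near a cone point of angle $2\pi m$ wraps the (simple) link of $w$ exactly $m$ times around the image point, so the fact that the link is an embedded circle in $S$ puts no constraint on $m$. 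With, say, five or more edges around $f$, consecutive angles each less than $\pi$ can perfectly well sum to $4\pi$; nothing local rules this out.

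The missing ingredient is global. The paper closes this gap with the Gauss--Bonnet theorem: writing $\phi_v$ for the cone angle at each vertex $v$, one has $\sum_v(2\pi-\phi_v)=2\pi\chi(S)=0$ because $S$ is a torus, and since each summand $2\pi-\phi_v$ is $\le 0$ (each $\phi_v$ being a positive multiple of $2\pi$), every $\phi_v$ must equal $2\pi$. Note that this is where the genus-one hypothesis enters essentially; on a higher-genus surface the same construction genuinely produces cone points, which is why no purely local argument at a single vertex can succeed. If you insert the Euler-characteristic step in place of your winding-number assertion, the rest of your write-up (the gluing, the translation holonomy, and the development of the universal cover onto $\R^2$) goes through and matches the paper's argument.
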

\begin{proof}

To construct the tiling, we choose for every intersection point of any two curves of the curve system a parallelogram with edge vectors $V_\alpha$ and $V_\beta$. Because the curve system is essential, every such intersection gives a determinant condition. We identify edges of two parallelograms if the corresponding intersections are connected by a segment from one of the curves of the curve system. After gluing the parallelograms together, we obtain a cone metric on the torus, where the cone points correspond to the disks into which the curve system divides the torus. We have to show that the cone angles at each cone point are $2\pi$. Choose a vertex $v$, and consider all edges emanating from that vertex in counterclockwise order. 
This order can be obtained by following the segments of the curve system around the vertex, switching to another curve at each intersection, just as in the proof of Theorem \ref{bigone}.

By the determinant condition, the counterclockwise angle from one edge to the next of the same parallelogram is  positive and less than $\pi$. Thus, the total cone angle $\phi_v$ will be a positive integral multiple of $2\pi$. By the Gauss-Bonnet Theorem, the  sum $\sum_v (2\pi-\phi_v) = 0$. Thus,   $\phi_v=2\pi$ for all vertices $v$.
\end{proof}

We now show that for an essential curve system, the moduli space of periodic tilings is nonempty:

\begin{theorem}
For a given essential curve system, there are always geometric data satisfying the determinant condition in Theorem \ref{thm:compatability}.
\end{theorem}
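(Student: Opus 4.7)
The plan is to use the topological homology classes of the curves as the geometric data. By Theorem \ref{bigone}, the essential curve system $\Gamma$ determines a torus $T$ on which $\Gamma$ is realized as a cellular curve system, so $H_1(T;\Z) \cong \Z^2$ can be read off directly from the combinatorics. Fix a positively oriented basis $e_1, e_2$ of $H_1(T;\Z)$, express the homology class of each $\gamma\in\Gamma$ as $h_\gamma = a_\gamma e_1 + b_\gamma e_2$, and define
\[
V_\gamma := (a_\gamma, b_\gamma) \in \R^2.
\]

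First I would verify that $V_\gamma \neq 0$ for every $\gamma$. A null-homologous simple closed curve on a torus is null-homotopic and therefore bounds an embedded disk, which is forbidden by condition (1) of essentiality; hence each $h_\gamma$ is a nonzero (in fact primitive) element of $\Z^2$. Next, for any two intersecting curves $\alpha,\beta \in \Gamma$,
\[
\det(V_\alpha, V_\beta) \;=\; a_\alpha b_\beta - a_\beta b_\alpha \;=\; h_\alpha \cdot h_\beta,
\]
the skew-symmetric intersection pairing on $H_1(T;\Z)$. Condition (2) of essentiality states that $\alpha$ and $\beta$ cobound no disk on $T$, so by the bigon criterion for simple closed curves on surfaces they are in minimal position. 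Consequently every geometric intersection of $\alpha$ and $\beta$ carries the same local sign, and the signed intersection number $\iota(\alpha,\beta)$ coincides with $h_\alpha \cdot h_\beta$. Therefore $\det(V_\alpha, V_\beta)$ has the same sign as $\iota(\alpha,\beta)$, meeting the hypothesis of Theorem \ref{thm:compatability}.

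The main step that requires care is the bigon-criterion invocation: one must confirm that condition (2) of Theorem \ref{thm:periodic}, phrased in terms of segments between two intersections bounding a disk, genuinely rules out every embedded bigon between $\alpha$ and $\beta$ on $T$. I would handle this with an innermost-bigon argument lifted to the universal cover $\R^2$: any bigon lifts to an embedded disk in the plane cobounded by arcs of lifts of $\alpha$ and $\beta$, and passing to an innermost such lift yields a bigon whose sides are segments between \emph{consecutive} intersections, contradicting essentiality. With this equivalence in hand, the homology-class construction above produces the required geometric data, completing the proof.
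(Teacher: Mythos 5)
Your proposal is correct and follows essentially the same route as the paper: both take the geometric data to be (an orientation-preserving linear image of) the homology coefficient vectors $V_\gamma=(a_\gamma,b_\gamma)$, so that $\det(V_\alpha,V_\beta)$ literally equals the homological intersection pairing. Your extra care with the bigon criterion fills in the one point the paper waves off as ``obvious,'' namely that essentiality forces intersecting curves to be in minimal position on the torus, so all local intersection signs agree and the algebraic intersection number of an intersecting pair is nonzero --- which is exactly what makes the strict determinant inequality hold.
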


\begin{proof}
For the given torus, pick a homology basis and a basis of $\R^2$. This allows one to define the zone vectors $T_\gamma$.  Now define geometric data $V_\gamma = R\cdot T_\gamma$ where $R$ is any  rotation matrix. These data obviously satisfy the determinant condition. Note that  in general, the zone vectors of the corresponding tiling will be different from the chosen vectors $T_\gamma$, but this is irrelevant, as the sign of the determinants depend only on the signs of the intersection numbers of the zone curves.
\end{proof}

\section{The  Structure Theorem for Periodic Tilings by Parallelograms}

\subsection{Notation and Main Result}

Our first goal is to set up a moduli space and a Teichmuller space of marked periodic tilings of the plane by parallelograms.

We first define the the Teichmuller space: 
Let $\tau$ be a periodic tiling of the plane by (labeled) parallelograms, where we identify tilings that just differ by a translation.
Denote by $\Lambda=\Lambda(\tau)$  the period lattice of the tiling and by $S=\C/\Lambda$  the quotient torus. Choose a basis $a, b$ of the period lattice $\Lambda$ --- this choice is equivalent to a choice of a homology basis $\alpha, \beta$ of $S$. The latter allows us to identify $S$ with a fixed torus $S_0$ (say the square torus) such that $\alpha$ and $\beta$ are identified with the standard basis 
$\alpha_0 =\begin{pmatrix}1\\ 0\end{pmatrix}$ and $\beta_0 =\begin{pmatrix}0\\ 1\end{pmatrix}$ of $S_0$. This identification is unique up to isotopy. We call a periodic tiling $\tau$ together with a choice $a, b$ of a basis of its period lattice a \emph{marked} tiling.

The procedure from section \ref{sec:geometric} then defines  a curve system $\Gamma$ on $S$ and thus on $S_0$, unique up to relabeling, cyclic permutations, and choices of orientation as explained.

Denote the set of all marked tilings with curve system $\Gamma$ by $\tilde \cM(\Gamma)$.

For a fixed curve system $\Gamma$ on $S_0$, any choice of edge data satisfying the compatibility conditions allows us to construct a periodic tiling, unique up to translations, together with a basis of the period lattice. This reduces the description of  $\tilde \cM(\Gamma)$ to the description of a subset of $\C^n$ that is characterized by a set of compatibility conditions.

The group $GL(2,\R)$ acts on $\tilde \cM(\Gamma)$ by left multiplication on the vertices of the tiling (and the basis vectors of the marking). As  every basis $(a, b)$ can be uniquely mapped to the standard basis of $\R^2$, we see that 
\[
\tilde \cM(\Gamma) = \tilde \cM_0(\Gamma) \times GL(2,\R)\ ,
\]
where $\tilde \cM_0(\Gamma)$ denotes the set of all elements of $\tilde \cM(\Gamma,\alpha,\beta)$ where $a =\begin{pmatrix}1\\ 0\end{pmatrix}$ and $b =\begin{pmatrix}0\\ 1\end{pmatrix}$.

Finally, the group $SL(2,\Z)$ acts on $\tilde \cM(\Gamma)$ and $\tilde \cM_0(\Gamma)$ by changing the basis of the lattice. The quotient spaces
\[
 \cM(\Gamma) = \tilde \cM(\Gamma)/SL(2,\Z) \qquad\hbox{and}\qquad  \cM_0(\Gamma) = \tilde \cM_0(\Gamma)/SL(2,\Z)
\]
are called the moduli spaces of periodic tiling with underlying curve system $S$.

Thus $\tilde \cM(\Gamma)$ serves us as a sort of Teichmuller space for periodic, with the caveat that this space is not simply connected, due to the presence of  rotations in $SL(2,\R)$. However, $\tilde \cM_0(\Gamma)$ is simply connected. More precisely, we have the following structure theorem:

\begin{theorem}\label{theorem:structure}
For a given curve system $\Gamma$ of genus 1 consisting of $n$ curves, the set $\tilde \cM(\Gamma)$ is naturally a non-empty  open subset  of $\C^{n}$. Its boundary is stratified by pieces of hypersurfaces given by equations of the form 
$\{ e\in\C^n:  \det_{\R}(e_i, e_j) = \im (\overline{e_i}e_j)=0\}$.

 Moreover, $\tilde \cM_0(\Gamma)$ is star shaped with respect to a distinguished point in $\tilde \cM_0(\Gamma)$. In particular, $\tilde \cM(\Gamma)$ is homotopy equivalent to $S^1$.
\end{theorem}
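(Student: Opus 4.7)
The plan is (i) to realize $\tilde\cM(\Gamma)$ as an open subset of $\C^n$ parametrized by edge vectors; (ii) to produce a distinguished point $e^* \in \tilde\cM_0(\Gamma)$ via the spectral structure of the intersection matrix; (iii) to establish star-shapedness of $\tilde\cM_0(\Gamma)$ at $e^*$ by a quadratic-polynomial analysis along straight-line segments; and (iv) to read off the homotopy type from the product decomposition.

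The first part is routine. Sending a marked tiling to its tuple of complex edge vectors $e = (e_1,\ldots,e_n) \in \C^n$ is, by Theorem~\ref{thm:compatability}, a bijection onto the set of $e$ for which $\im(\overline{e_i} e_j)$ has the same sign as the intersection number $I_{ij}$ of every pair of intersecting zone curves. Each such strict inequality is open and real-algebraic, so $\tilde\cM(\Gamma)$ is open in $\C^n$; non-emptiness is the content of the preceding theorem. The boundary lies in the union of hypersurfaces $\{e : \det_\R(e_i,e_j) = \im(\overline{e_i} e_j) = 0\}$, stratified by which subset of equations holds simultaneously. In $\tilde\cM_0(\Gamma)$ the lattice basis is fixed to $(1, i)$, so the zone vector of $\gamma_k$ equals its homology class $T_k = a_k + i b_k$; tracing edge contributions along $\gamma_k$ (each parallelogram crossing contributes a signed copy of $e_j$ weighted by the intersection sign with $\gamma_j$) gives the linear constraint $T = I e$ with $I = a b^T - b a^T$ of rank $2$, making $\tilde\cM_0(\Gamma)$ the intersection of an affine subspace of complex codimension $2$ with the compatibility cone.

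The intersection matrix $I$ is real skew-symmetric with nonzero eigenvalues $\pm i\mu$, where $\mu = \sqrt{|a|^2|b|^2 - (a\cdot b)^2}$, and image $\mathrm{span}_\C(a, b)$. Solving $Ie = T$ inside this image by the ansatz $e^* = pa + qb$ reduces to an invertible $2\times 2$ complex system whose unique solution satisfies $\im(q\bar p) = 1/\mu^2$; a direct bilinear computation then yields
\[
\im\!\bigl(\overline{e^*_i}\, e^*_j\bigr) \;=\; \im(q\bar p)\,(a_i b_j - a_j b_i) \;=\; \tfrac{1}{\mu^2}\, I_{ij}
\]
for every $i, j$, so $e^*$ satisfies every compatibility condition with the uniform positive factor $1/\mu^2$; this is the distinguished point.

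The main obstacle is the star-shapedness of $\tilde\cM_0(\Gamma)$ at $e^*$. For any $e \in \tilde\cM_0(\Gamma)$, the difference $d = e - e^*$ lies in $\ker I = \mathrm{span}_\C(a,b)^\perp$, so $Ie(t) = T$ is automatic along $e(t) = e^* + td$, and for each intersecting pair the open condition becomes $\mathrm{sgn}(q_{ij}(t)) = \mathrm{sgn}(I_{ij})$, where $q_{ij}(t) = (1/\mu^2) I_{ij} + tB_{ij} + t^2 C_{ij}$ is a real quadratic whose values at $t=0$ and $t=1$ already have the correct sign. Ruling out a sign change on $(0, 1)$ is where the spectral choice of $e^*$ is essential. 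From $a\cdot d = b\cdot d = 0$ one derives $\bar d^T T = 0$, so the global quantity $\sum_{ij} I_{ij}\, q_{ij}(t) = \im(\overline{e(t)}^T T)$ is independent of $t$, which yields the balance relations $\sum I_{ij} B_{ij} = 0 = \sum I_{ij} C_{ij}$. Combined with explicit formulas for $B_{ij}$ and $C_{ij}$ in terms of the projections of $d$ along $a, b$, a careful case analysis of the quadratic (discriminant versus vertex location) should show that the two roots of $q_{ij}$ never both lie in $[0, 1]$, so the sign is preserved. Once star-shapedness is in hand, $\tilde\cM_0(\Gamma)$ is contractible; combined with the product decomposition $\tilde\cM(\Gamma) = \tilde\cM_0(\Gamma) \times GL(2,\R)$ and the fact that the identity component of $GL(2,\R)$ deformation-retracts onto $SO(2) \simeq S^1$, we conclude $\tilde\cM(\Gamma) \simeq S^1$.
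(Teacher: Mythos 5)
Parts (i), (ii) and (iv) of your plan are sound. Your construction of the distinguished point is a legitimate variant of the paper's: the paper takes $e^0$ to be an eigenvector of the generalized intersection matrix $C$ for the eigenvalue $i\mu$, whereas you solve $Ce^*=T$ inside $\operatorname{im}C=\mathrm{span}_{\C}(a,b)$; since $C$ is invertible on its image and commutes with the real-linear change of basis that carries the eigenvector's lattice onto $(1,i)$, your $e^*$ is exactly the paper's canonical point transported into the normalized slice $\tilde\cM_0(\Gamma)$. Your identity $\im(\overline{e^*_i}e^*_j)=\mu^{-2}I_{ij}$ is correct and gives a clean admissibility check.

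The gap is in step (iii), which is the heart of the theorem, and your argument as written cannot close it. A real quadratic that is positive at $t=0$ and $t=1$ can perfectly well be negative in between (e.g.\ $0.01-t+t^2$), so \emph{something} specific about $e^*$ must be invoked. The only structural input you offer is the pair of balance relations $\sum_{ij}I_{ij}B_{ij}=\sum_{ij}I_{ij}C_{ij}=0$; these constrain one weighted \emph{sum} of the $\binom{n}{2}$ quadratics $q_{ij}$ and say nothing about any individual $q_{ij}$, so no discriminant-versus-vertex case analysis can be extracted from them alone. The phrase ``should show'' is doing all the work. What actually makes the paper's proof go through is a pointwise geometric fact you never use: in the eigenvector normalization $e^0=-\tfrac{i}{\lambda}z$, i.e.\ each canonical edge vector is the $-90^\circ$ rotation of its zone vector, hence sits in the \emph{middle} of the half-plane $\{w:\det_{\R}(z_k,w)<0\}$ to which every admissible $e_k$ in the slice is confined (this confinement follows by summing the pairwise conditions: $\det_{\R}(z_k,e_k)=-\sum_l c_{kl}\det_{\R}(e_k,e_l)<0$). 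After normalizing $z_i=1$, $z_j=i$ for a fixed intersecting pair, $e^0_i$ and $e^0_j$ lie on the two boundary rays of the quadrant $\{\re>0,\im<0\}$, the arguments of $e^t_i$ and $e^t_j$ vary monotonically along the segments, and a four-case quadrant analysis keeps $\arg e^t_j-\arg e^t_i$ in $(0,\pi)$. You would either need to reproduce this angular argument (note that in your normalized slice $e^*_k$ is no longer literally perpendicular to $T_k$, so you must first transport back by the appropriate element of $GL^+(2,\R)$, which preserves segments and determinant signs), or supply genuinely new per-pair estimates on $B_{ij}$ and $C_{ij}$; as it stands the star-shapedness, and with it the homotopy equivalence $\tilde\cM(\Gamma)\simeq S^1$, is asserted rather than proved.
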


\begin{example}
Consider the curve system $\Gamma$ with combinatorial data $(1), (-1)$. This consists of just two zone curves that intersect in a single parallelogram. In this case $\tilde \cM_0(\Gamma)$ consists of a single point represented by the square tiling.
\end{example}

\begin{example}\label{example:proof1}
To illustrate the proof, we will use the following example of a curve system as we go along:
\begin{align*}
a_1 & =  (1,2,3,4) \\
a_2 & =  (-1,5) \\
a_3 & = (-3,6) \\
a_4 & = (-2,-5,-4,-6).
\end{align*}
\begin{figure}[h]
\begin{center}
\includegraphics[width=0.7\linewidth]{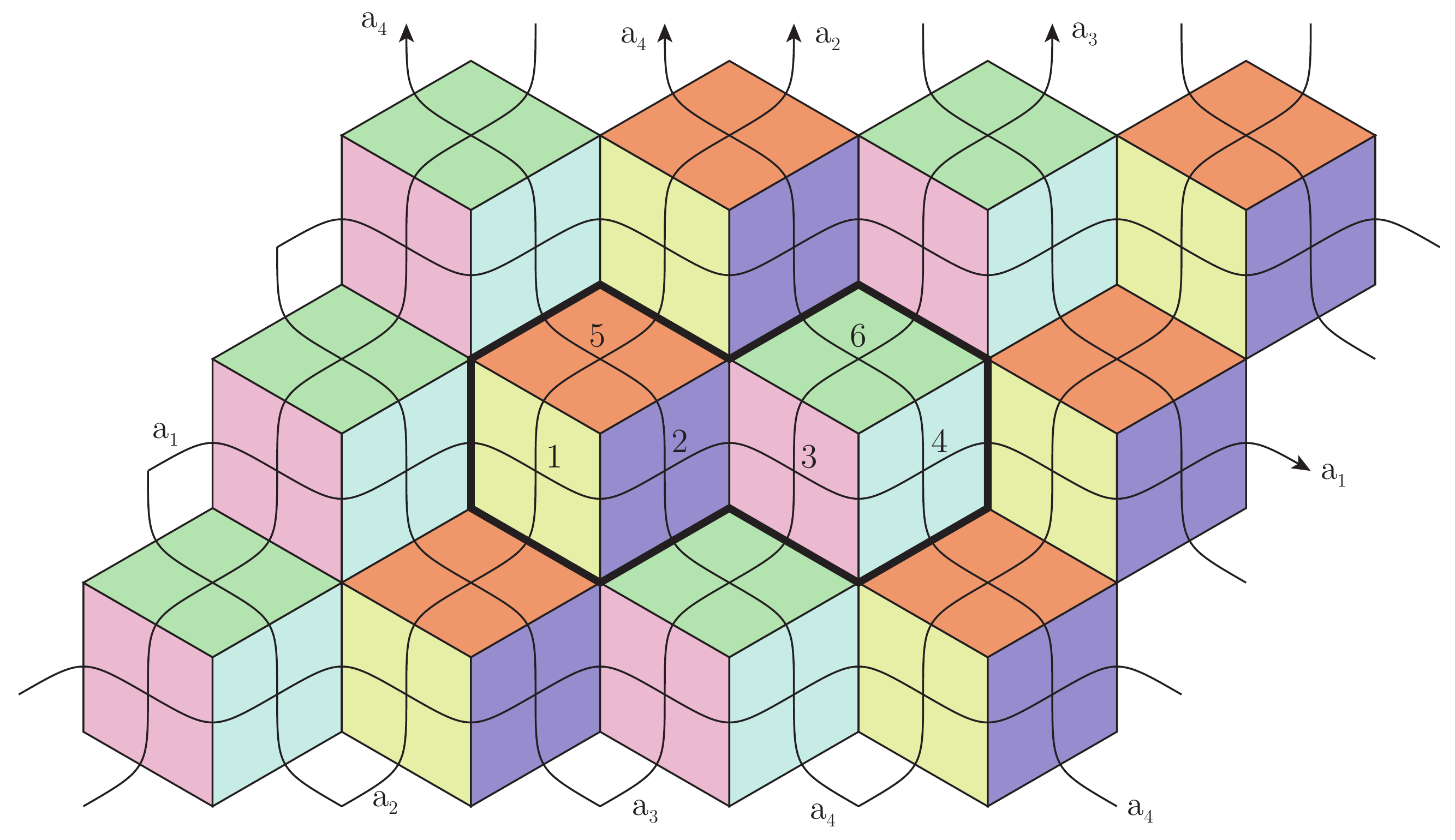}
\caption{Periodic Tiling by six colored parallelograms, with curve system and fundamental domain.}
\label{figure:sixrhombs}
\end{center}
\end{figure}
\end{example}

The proof of the theorem will be carried out in the subsequent subsections. In subsection \ref{subsec:data} we identify $\tilde \cM(\Gamma)$ with an open subset of $\C^n$, using edge vectors as parameters.
Our next goal is to find an explicit  distinguished point in $\cM(\Gamma)$. Its  edge vectors are found as entries of an eigenvector $e^0$ of the generalized intersection matrix, which will be introduced in Subsection \ref{subsec:intersect}. 
Using geometric arguments we finally show that the convex combinations with $e^0$ of any other point in $\cM(\Gamma)$  that has the same periods as $e^0$ still lies in $\cM(\Gamma)$.

\subsection{The Generalized Intersection Matrix}\label{subsec:intersect}
We will now introduce our main tool for proving the structure theorem. We will give the definitions and basic properties  for curve systems of arbitrary genus,
and specialize later. 

Let $\Gamma$ be a curve system of genus $g$ consisting of zone curves $\gamma_i$ for $i=1,\ldots, n$. Recall that the curves $\gamma_i$ come with a natural orientation, and that the underlying Riemann surface constructed form the combinatorial data also has a natural orientation.

\begin{definition}
Given a curve system $\gamma_i$ for $i=1,\ldots, n$, we define the {\em generalized intersection matrix} as
\[
C = (c_{i,j}) =\gamma_i \cdot \gamma_j
\]
where  $\gamma_i \cdot \gamma_j$ is the intersection number of the two cycles $\gamma_i$ and $\gamma_j$, counting
multiplicity and taking orientation into account.
\end{definition}

The combinatorial data of a curve system $\Gamma$ can be used to easily compute the intersection matrix $C$.

\begin{definition}
A curve system $\gamma_i$ is called  {\em non-degenerate} if it generates the rational homology.
\end{definition}

\begin{lemma}
For a non-degenerate curve system $\Gamma$ of genus $g$, the generalized intersection matrix $C$ has rank $2g$.
\end{lemma}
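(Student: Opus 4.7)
The plan is to realize $C$ as the Gram matrix of the pullback of the intersection form on $H_1(S,\Q)$ along the surjection determined by the curves $\gamma_i$, and then use non-degeneracy of the intersection pairing on a closed oriented surface.

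More concretely, I would set $V = \Q^n$ with standard basis $e_1,\ldots,e_n$ and define a linear map $\phi : V \to H_1(S,\Q)$ by $\phi(e_i) = [\gamma_i]$. The non-degeneracy hypothesis says exactly that $\phi$ is surjective. Now consider the bilinear form $B$ on $V$ given by $B(v,w) = \phi(v) \cdot \phi(w)$; by construction its matrix in the basis $\{e_i\}$ is precisely $C$, since $c_{i,j} = [\gamma_i] \cdot [\gamma_j]$. Hence $\operatorname{rank} C = \operatorname{rank} B = \dim V - \dim \ker B$.

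The next step is to identify $\ker B$. A vector $v \in V$ lies in $\ker B$ iff $\phi(v) \cdot \phi(w) = 0$ for every $w \in V$. Since $\phi$ is surjective, $\phi(w)$ ranges over all of $H_1(S,\Q)$, so this says $\phi(v) \cdot h = 0$ for every $h \in H_1(S,\Q)$. Here I invoke the standard fact (a consequence of Poincar\'e duality on the closed oriented surface $S$ reconstructed in Theorem \ref{bigone}) that the intersection pairing on $H_1(S,\Q)$ is non-degenerate; therefore the condition forces $\phi(v) = 0$, i.e.\ $v \in \ker \phi$. The reverse inclusion is obvious, so $\ker B = \ker \phi$.

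Putting the two equalities together gives
\[
\operatorname{rank} C \;=\; \dim V - \dim \ker \phi \;=\; \dim \operatorname{image}(\phi) \;=\; \dim H_1(S,\Q) \;=\; 2g,
\]
where the last equality is the standard computation of the first rational homology of a closed oriented surface of genus $g$. I do not expect a serious obstacle here: the entire argument is a routine unpacking of definitions, and the only non-trivial input is the non-degeneracy of the intersection form on $H_1(S,\Q)$, which is part of the standard topology of closed oriented surfaces and applies directly since the surface produced in Section~2 is closed and oriented by construction.
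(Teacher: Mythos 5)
Your proof is correct and uses essentially the same idea as the paper: both arguments rest on the fact that the $\gamma_i$ span $H_1(S,\Q)$ together with the non-degeneracy of the intersection pairing on a closed oriented surface. The paper phrases this as two rank inequalities (expressing the curves in a homology basis and vice versa), whereas you compute the radical of the pulled-back form directly; your version is a slightly cleaner packaging of the identical argument.
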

\begin{proof}
If the curve system happens to be a homology basis, the generalized intersection matrix is just the usual intersection matrix,
which is well known to be non-degenerate, and thus of rank $2g$. In general, the cycles from the curve system are a linear combination of the cycles from a homology 
basis, and therefore the generalized intersection matrix has rank at most $2g$. Similarly, as the curve system is non-degenerate, the cycles from a homology basis can be written as linear combination of the cycles from the curve system, so that the rank of the usual intersection matrix is at most the rank of the generalized intersection matrix.
\end{proof}

\begin{example}
The curve system given by $(1, 4, 3)$, $(-1, -2)$,
$ (-4, -5)$ and
$(2, -3, 5)$ has genus 3, but the curves do not generate the homology. The generalized intersection matrix
\[
C=\begin{pmatrix}
0 & 1 & 1 & 1 \\
-1& 0 & 0 & -1\\
-1 & 0 & 0 & -1\\
-1 & 1 & 1 & 0\\
\end{pmatrix}
\]
has rank 2.
\end{example}

\begin{example}\label{example:proof2}
The curve system from Example \ref{example:proof1} has
 generalized intersection matrix
\[
C=\begin{pmatrix}
0 & 1 & 1 & 2 \\
-1& 0 & 0 & 1\\
-1 & 0 & 0 & 1\\
-2 & -1 & -1 & 0\\
\end{pmatrix}
\]
has rank 2.
\end{example}

However, by Lemma \ref{lemma:nondegenerate}, a curve system of genus 1 is always non-degenerate.

In subsection \ref{subsec:canonical} we will need information about the eigenvalues of $C$ to construct a tiling with canonical geometrical data:

\begin{lemma}\label{lemma:eigen}
For a non-degenerate curve system $\Gamma$ of genus $g$, the generalized intersection matrix $C$ has purely imaginary non-zero eigenvalues that come in $g$ conjugate pairs.
\end{lemma}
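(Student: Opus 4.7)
The plan is to combine two elementary facts: the antisymmetry of the intersection pairing on an oriented surface and the rank computation from the previous lemma.

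First I would observe that the intersection number of two oriented cycles on an oriented surface satisfies $\gamma_i \cdot \gamma_j = -\gamma_j \cdot \gamma_i$, and in particular $\gamma_i \cdot \gamma_i = 0$. Hence $C$ is a real skew-symmetric matrix, $C^T = -C$.

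Next I would invoke the standard spectral theory for real skew-symmetric matrices: such a matrix is normal (it commutes with its transpose, since $C C^T = -C^2 = C^T C$), so it is diagonalizable over $\C$, and for any eigenvector $v$ with eigenvalue $\lambda$ one has $\lambda \skp{v}{v} = \skp{Cv}{v} = -\skp{v}{Cv} = -\bar\lambda \skp{v}{v}$, forcing $\lambda = -\bar\lambda$, i.e.\ $\lambda\in i\R$. Because $C$ has real entries, its characteristic polynomial has real coefficients, so non-real eigenvalues (equivalently, non-zero purely imaginary eigenvalues) occur in complex conjugate pairs $\pm i\mu$ with $\mu\in\R\setminus\{0\}$.

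Finally I would use the preceding lemma, which asserts $\operatorname{rank} C = 2g$. Since $C$ is diagonalizable, its rank equals the number of non-zero eigenvalues counted with multiplicity, so there are exactly $2g$ non-zero eigenvalues, which assemble into exactly $g$ conjugate pairs $\pm i\mu_1,\ldots,\pm i\mu_g$ as claimed.

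There is no real obstacle here; the only subtlety is the equality of rank and number of non-zero eigenvalues, which relies on diagonalizability and is why the normality observation is worth recording explicitly.
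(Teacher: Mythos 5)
Your proof is correct and follows essentially the same route as the paper's: skew-symmetry of $C$ from the antisymmetry of the intersection pairing, the standard spectral theory of real skew-symmetric matrices to get purely imaginary eigenvalues in conjugate pairs, and the rank computation $\operatorname{rank} C = 2g$ from the preceding lemma to count exactly $g$ non-zero pairs. The only difference is that you spell out the normality and eigenvalue computations that the paper takes for granted.
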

\begin{proof}
As a skew symmetric matrix, $C$ is diagonalizable over the complex numbers with purely imaginary eigenvalues, coming in conjugate pairs.
As the curve system is non-degenerate, there will be precisely $g$ such pairs.
\end{proof}

For a given choice of a homology basis of the  surface $S$, we can express the curves $\gamma_i$ in terms of the homology basis and thus the generalized intersection matrix in terms of the intersection numbers of the homology basis:

Let $\alpha_i$ and $\beta_i$, $i=1,\ldots,n$ be a canonical homology basis of $S$, i.e one with intersection numbers $\alpha_i \cdot \alpha_j=0=\beta_i\cdot \beta_j$ and
$\alpha_i\cdot \beta_j =\delta_{i,j}$. 

Then there are integers $a_{ij}$ and $b_{ij}$ such that
\[
\gamma_i = \sum_{j=1}^g a_{ij}\alpha_j + b_{ij}\beta_j
\]

Thus

\begin{align*} 
   \gamma_i \cdot \gamma_j = {}& \left(\sum_{k=1}^g a_{ik}\alpha_k + b_{ik}\beta_k\right) \cdot \left(\sum_{k=1}^g a_{jk}\alpha_k + b_{jk}\beta_k\right)\\
 = {}& \sum_{k=1}^g a_{ik}b_{jk} - b_{ik}a_{jk}  
\end{align*}

If we let $A=(a_{ij})$ and $B=(b_{ij})$ be the corresponding $n\times g$ matrices,  we obtain
\begin{lemma}\label{lemma:intersect}
\[
C = A B^t - B A^t 
\]
\end{lemma}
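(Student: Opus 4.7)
The plan is essentially to read off the formula for $\gamma_i \cdot \gamma_j$ that has already been derived just above the lemma and recognize it as an entry of $AB^t - BA^t$.

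First I would start from the identity
\[
\gamma_i \cdot \gamma_j = \sum_{k=1}^{g} \bigl(a_{ik} b_{jk} - b_{ik} a_{jk}\bigr),
\]
which was established in the excerpt by bilinearly expanding $\gamma_i$ and $\gamma_j$ in the canonical homology basis and using $\alpha_i\cdot\alpha_j = \beta_i\cdot\beta_j = 0$ together with $\alpha_i\cdot\beta_j = \delta_{ij}$ (and the skew-symmetry $\beta_j\cdot\alpha_i = -\delta_{ij}$).

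Next I would compute the $(i,j)$ entry of each of the two matrices appearing on the right-hand side of the lemma. By definition of matrix multiplication and transpose,
\[
(AB^t)_{ij} = \sum_{k=1}^g a_{ik} (B^t)_{kj} = \sum_{k=1}^g a_{ik} b_{jk},
\qquad
(BA^t)_{ij} = \sum_{k=1}^g b_{ik} (A^t)_{kj} = \sum_{k=1}^g b_{ik} a_{jk}.
\]
Subtracting gives exactly the displayed expression for $\gamma_i \cdot \gamma_j = c_{ij}$, so $C = AB^t - BA^t$ entrywise, which is the claim.

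There is really no obstacle here; the lemma is a direct matrix-form rewriting of the formula already computed above. As a sanity check one may note that the right-hand side is automatically skew-symmetric, matching the skew-symmetry of the intersection form, and that its rank is at most $2g$, consistent with the earlier lemma bounding $\operatorname{rank} C$ by $2g$ for a non-degenerate curve system of genus $g$.
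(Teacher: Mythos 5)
Your proposal is correct and matches the paper's (implicit) argument exactly: the paper derives $\gamma_i\cdot\gamma_j=\sum_k (a_{ik}b_{jk}-b_{ik}a_{jk})$ in the display immediately preceding the lemma and states the matrix identity as its direct entrywise reformulation. Your identification of this sum with the $(i,j)$ entry of $AB^t-BA^t$ is precisely what is needed, and the sanity checks on skew-symmetry and rank are consistent with the surrounding text.
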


Note that in the case that $g=1$, $A$ and $B$ are just vectors.

One may wonder whether the generalized intersection matrix contains enough information to reconstruct the combinatorial curve system. This is, however, not the case:
Suppose we have a periodic tiling where three parallelograms fit together to form a hexagon. Then the subdivision of the hexagon by the parallelograms can be switched to another subdivision, as shown in  Figure \ref{figure:rototiler}. This switch changes the tiling locally and subjects the combinatorial curve system to a certain permutation, but leaves the generalized intersection matrix unchanged. This operation on finite rhombic tilings was first introduced by Alan Schoen \cite{schoen0}  around 1980 in a computer game called Rototiler.

\begin{figure}[h]
\begin{center}
\includegraphics[width=0.7\linewidth]{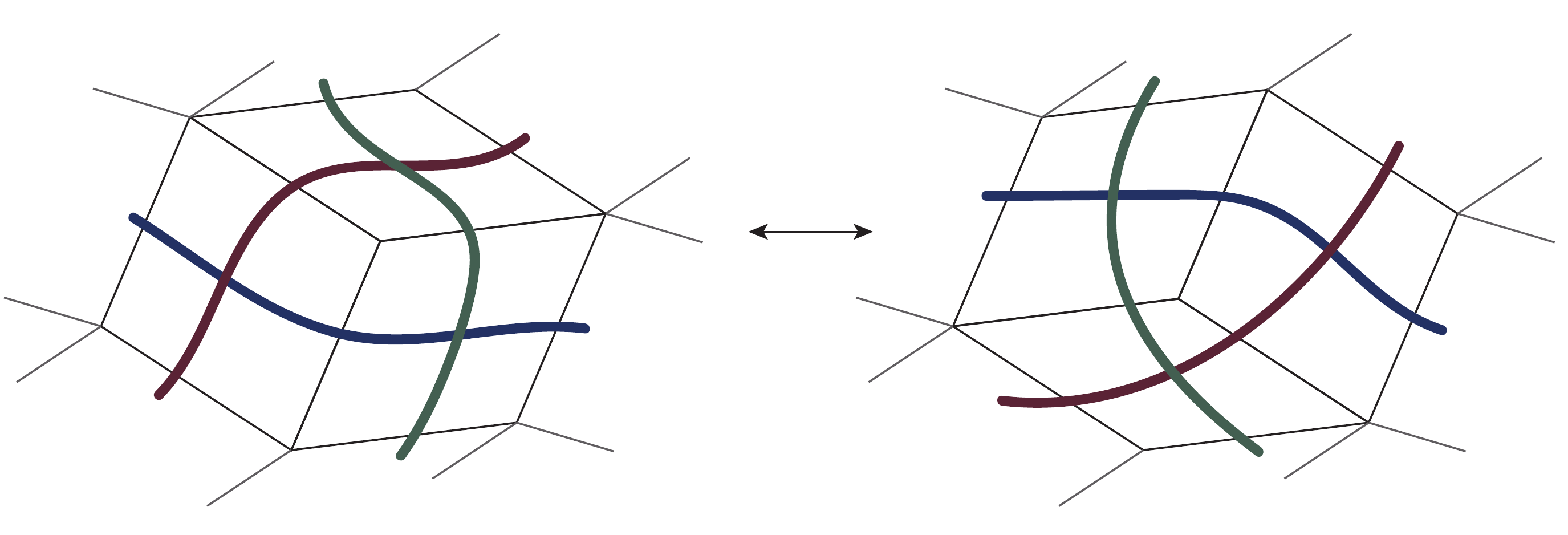}
\caption{A Rototiler move on hexagons.}
\label{figure:rototiler}
\end{center}
\end{figure}

\subsection{Edge Data and Zone Vectors}\label{subsec:data}

 We now add geometric data to a curve system, restricting all attention to $g=1$ as in section \ref{sec:geometric}. We will reformulate the compatibility condition in Lemma \ref{lemma:compatability} and relate it to the intersection matrix and the period lattice. 
 
 Recall that for a given tiling by parallelograms,  the zone system was the system of curves
 that traverse parallelograms across opposite edges. Hence we can assign the edge vector $e_i=V_{\gamma_i} \in\C$ of the traversed edge to each zone curve $\gamma_i$. As in section \ref{sec:geometric}, we choose the orientation of that edge so that  $\gamma_i'$ and $e_i$ form an oriented basis of $\R^2=\C$.

\begin{definition}
We call the vector $e=(e_i)_{i=1}^n$ the edge data of the tiling. The components $e_i\in\C$ are the edge vectors of the parallelograms. 
\end{definition}
 
 The compatibility condition for edge data can be expressed as follows:
 
  \begin{definition}
 A set of edge vectors $e_i$ is {\em admissible} if for every pair $(i,j)$ where the zones $i$ and $j$ intersect, the edge vectors $e_i$ and $e_j$ are compatible. This is the case if and only if $ c_{ij}\det_{\R}(e_i, e_j)>0$ holds for all $i,j$ where $c_{ij}\ne0$.
 \end{definition}

 As the admissibility condition is open, we have:
\begin{corollary}\label{cor:localdim}
For any periodic tiling with a curve system $(\gamma_1,\ldots,\gamma_n)$ of $n$ curves and (admissible) edge data $e\in \C^n$, there is a neighborhood of $e$ in $\C^n$ of admissible edge data, and hence a periodic tiling with these edge data.
\end{corollary}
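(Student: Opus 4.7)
The plan is to observe that admissibility of edge data is defined by finitely many strict inequalities of the form $c_{ij}\det_{\R}(e_i,e_j)>0$ (one for each ordered pair $(i,j)$ with $c_{ij}\neq 0$), and then to invoke Theorem~\ref{thm:compatability} to pass from admissible edge data back to an actual tiling.

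More concretely, first I would fix the admissible edge vector $e = (e_1,\ldots,e_n) \in \C^n$ coming from the given tiling, and consider the function
\[
\Phi_{ij}(e) := c_{ij}\det_{\R}(e_i,e_j),
\]
viewed as a real polynomial (in fact bilinear) in the real and imaginary parts of the coordinates of $\C^n$. Since there are only finitely many pairs $(i,j)$ with $c_{ij}\neq 0$, and for each such pair $\Phi_{ij}$ is a continuous function with $\Phi_{ij}(e)>0$ by hypothesis, the set
\[
U := \{\, e' \in \C^n : \Phi_{ij}(e')>0 \text{ for every } (i,j) \text{ with } c_{ij}\neq 0\,\}
\]
is an open neighborhood of $e$ consisting entirely of admissible edge data.

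Next I would apply Theorem~\ref{thm:compatability} pointwise: for each $e' \in U$, the compatibility/determinant hypothesis is satisfied, so there exists a periodic parallelogram tiling of the plane whose zone system is the prescribed essential curve system $\Gamma$ and whose edge data agrees with $e'$. This yields the second assertion.

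I do not anticipate any substantive obstacle: the content is entirely that strict open conditions cut out open sets, combined with the already-established existence result. The only thing worth emphasizing in the write-up is that the curve system being of genus $1$ and essential (as required for the setting of this section) ensures that Theorem~\ref{thm:compatability} is applicable to every $e' \in U$, since the combinatorial intersection pattern $(c_{ij})$ does not change as $e'$ varies in $U$.
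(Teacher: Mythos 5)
Your proposal is correct and matches the paper's own argument, which likewise notes that admissibility is cut out by finitely many strict inequalities $c_{ij}\det_{\R}(e_i,e_j)>0$ (hence an open condition) and then invokes Theorem~\ref{thm:compatability} to realize each nearby admissible $e'\in\C^n$ by an actual periodic tiling. No further comment is needed.
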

This proves the dimension claim in the Structure Theorem \ref{theorem:structure}. 
 
 Moreover, a boundary point $e$ of the set of admissible edge data  satisfies $\det_{\R}(e_r, e_s)=0$  for a pair of indices where $c_{rs}\ne0$. Thus

\begin{proposition}
 For a pair of indices  $r,s$ where $c_{rs}\ne0$,  let $H_{r,s}$ be the hypersurface given by $\det_{\R}(e_r, e_s)=0$. Denote by $B_{r,s}$ the possibly empty subset of $H_{r,s}$ 
 given by the relaxed admissibility conditions $\det_{\R}(e_i, e_j) c_{ij}>0$ for all $\{i,j\}\ne  \{r,s\}$ where $c_{ij}\ne0$.
  Then the boundary of the set of admissible edge data consists of  the union of all sets $B_{r,s}$.
 \end{proposition}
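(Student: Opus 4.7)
The plan is to unpack admissibility as a system of finitely many strict real-polynomial inequalities and then apply elementary point-set topology. Writing $\det_\R(e_i,e_j) = \im(\overline{e_i}e_j)$, which is continuous on $\C^n$, we see that $\tilde\cM(\Gamma)$ is cut out by the strict inequalities $c_{ij}\det_\R(e_i,e_j) > 0$ over all unordered pairs $\{i,j\}$ with $c_{ij}\neq 0$. As a finite intersection of open half-spaces, $\tilde\cM(\Gamma)$ is open, and consequently its topological boundary $\partial\tilde\cM(\Gamma)$ consists of precisely those $e\in\C^n$ at which all of these products are $\ge 0$ and at least one equals $0$.

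To prove $B_{r,s}\subset\partial\tilde\cM(\Gamma)$, fix $e\in B_{r,s}$; by definition $\det_\R(e_r,e_s)=0$, so $e\notin\tilde\cM(\Gamma)$, and it remains to construct admissible $e(t)\to e$. The natural candidate perturbs only the $r$-th coordinate by $e_r(t) := e_r - t\,c_{rs}\,i\,e_s$ with $t>0$ small. Using $\det_\R(ie_s,e_s)=-|e_s|^2$ one computes $c_{rs}\det_\R(e_r(t),e_s) = t\,c_{rs}^2\,|e_s|^2 > 0$, while the other inequalities, being strict at $e$, remain strict for $t$ small by continuity. The hypothesis $e\in B_{r,s}$ forces $e_s\neq 0$ whenever the zone $\gamma_s$ meets some $\gamma_j$ with $j\neq r$, because then $c_{js}\det_\R(e_j,e_s) > 0$ rules out $e_s=0$; in the exceptional case where $\gamma_s$ meets only $\gamma_r$, no other admissibility condition involves $e_s$, so a symmetric perturbation of $e_s$ itself achieves the same conclusion.

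For the opposite inclusion, any $e\in\partial\tilde\cM(\Gamma)$ must have $\det_\R(e_r,e_s)=0$ for at least one pair $(r,s)$ with $c_{rs}\neq 0$. If this is the only vanishing pair then $e\in B_{r,s}$ by definition. If several pairs vanish, $e$ sits in a lower-dimensional stratum contained in the common boundary of the relevant $B_{r,s}$; a generic small perturbation confined to $H_{r,s}$ restores the remaining strict inequalities and exhibits $e$ as a limit of points of $B_{r,s}$, so that $e\in\overline{B_{r,s}}$. Together, the two inclusions give the stratification of $\partial\tilde\cM(\Gamma)$ whose top-dimensional pieces are exactly the $B_{r,s}$ described in the statement.

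The only real obstacle is the explicit perturbation in the first inclusion: one needs a direction transverse to $H_{r,s}$ with the correct sign so that the failing inequality becomes strict while respecting all the other open conditions. Once that direction is identified using the skew-symmetric $\R$-bilinearity of $\det_\R$ (which makes the linear term in $t$ of $\det_\R(e_r(t),e_s)$ non-zero and of controlled sign), the remainder of the argument is the formal openness of finitely many strict inequalities.
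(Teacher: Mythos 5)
The paper offers essentially no proof of this proposition: it merely observes that admissibility is a finite system of strict open conditions, so a boundary point must satisfy $\det_{\R}(e_r,e_s)=0$ for some pair with $c_{rs}\ne0$, and then asserts the statement. Your write-up is therefore more detailed than the source, and your forward inclusion $B_{r,s}\subset\partial\tilde\cM(\Gamma)$ is correct and supplies the substantive content the paper skips: the perturbation $e_r\mapsto e_r-t\,c_{rs}\,\sqrt{-1}\,e_s$ indeed gives $c_{rs}\det_{\R}(e_r(t),e_s)=t\,c_{rs}^2|e_s|^2>0$ while the remaining strict inequalities persist for small $t>0$, and your observation that the other conditions force $e_s\ne0$ handles the nondegeneracy (except in the two-curve situation where one may also need $e_r\ne0$; if both vanish one simply perturbs both coordinates, since no other condition constrains them).

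The genuine gap is in the reverse inclusion. A point of $\partial\tilde\cM(\Gamma)$ at which two or more of the products $c_{ij}\det_{\R}(e_i,e_j)$ vanish simultaneously lies in \emph{no} $B_{r,s}$, since each $B_{r,s}$ demands strict positivity of every other product; so the boundary literally equals $\bigcup B_{r,s}$ only if such corner points are absent, and otherwise one must pass to closures (consistent with the ``stratified by'' phrasing of Theorem \ref{theorem:structure}, but not with the Proposition as written). Your repair --- ``a generic small perturbation confined to $H_{r,s}$ restores the remaining strict inequalities'' --- is precisely the assertion that needs proof: for a system of quadratic strict inequalities it is not automatic that a point where several degenerate is approximable from the locus where only one degenerates (compare $\{x>0\}\cap\{-x>0\}$, whose ``relaxed'' locus $\{x=0\}$ is not a boundary of anything). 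Knowing $e\in\partial\tilde\cM(\Gamma)$ does give nearby points where all products are strictly positive, but converting these into nearby points of $B_{r,s}$ requires controlling the ratios of the several simultaneously vanishing products, which a one-coordinate perturbation does not obviously achieve. Either restrict the claim to the top-dimensional stratum (which is all the paper actually uses) or supply this approximation argument explicitly.
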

 
 This proves the claim about the local nature of the boundary of $\tilde M (\Gamma)$ in the Structure Theorem. Observe that the hypersurface $\det_{\R}(e_r, e_s)=0$ equations are non-convex.

 If we follow a zone curve $\gamma_i$ once around on the quotient torus $S$, it develops in $\C$ to a zone vector $z_i:=T_{\gamma_i}$ (compare Lemma \ref{lemma:zonevec}) in the period lattice of the tiling, which we can determine explicitly: 
  
 \begin{lemma}\label{lemma:develop}
 Let $e = (e_i)$ and $z = (z_i)$ be the column vectors of the complex edge and zone vectors. Then
 \[
 z = C e
 \]
 \end{lemma}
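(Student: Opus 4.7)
The plan is to develop the zone curve $\gamma_i$ in $\C$, express the total displacement $z_i$ as a telescoping sum of contributions from the parallelograms the lift traverses, and then identify this sum with $(Ce)_i$ by matching the sign of each contribution with an intersection sign. Fix a lift to $\C$ of a basepoint of $\gamma_i$ and follow $\gamma_i$ once around on $S$; the endpoint of the resulting path differs from the start by $z_i = T_{\gamma_i}$. Let $P_1, \ldots, P_N$ be the parallelograms that this lift crosses, in order. Since $\gamma_i$ is a simple closed curve on $S$ that traverses each parallelogram it meets across a single fixed pair of opposite $e_i$-edges, it visits every parallelogram of $S$ at most once in one loop, so the $P_k$ correspond bijectively to the parallelograms on $S$ that $\gamma_i$ meets; each such $P_k$ contains exactly one other zone curve $\gamma_{j_k}$, whose intersection with $\gamma_i$ is the single crossing at the center of $P_k$.

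Next I would compute the displacement of the lift across a single $P_k$. The curve $\gamma_i$ enters $P_k$ at the midpoint of one edge parallel to $e_i$ and exits at the midpoint of the opposite such edge; since the two remaining opposite edges of $P_k$ are parallel to $e_{j_k}$, the entry and exit midpoints differ by $\epsilon_k e_{j_k}$ for some $\epsilon_k \in \{\pm 1\}$. The main technical step is to identify $\epsilon_k$ with the sign of the intersection of $\gamma_i$ and $\gamma_{j_k}$ at the center of $P_k$. This amounts to a short sign calculation using the orientation conventions $\det_{\R}(\gamma_i', e_i) > 0$ and $\det_{\R}(\gamma_{j_k}', e_{j_k}) > 0$ from Subsection \ref{subsec:data}: these pin down the directions of $\gamma_i'$ and $\gamma_{j_k}'$ relative to $e_{j_k}$ and $e_i$ respectively, and Lemma \ref{lemma:compatability} guarantees consistency of the resulting sign with that of the intersection.

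Telescoping the contributions and regrouping by the index of the second curve then yields
\[
z_i \;=\; \sum_{k=1}^N \epsilon_k\, e_{j_k} \;=\; \sum_{j=1}^n \Bigl(\sum_{k\,:\,j_k=j} \epsilon_k \Bigr) e_j \;=\; \sum_{j=1}^n (\gamma_i\cdot \gamma_j)\, e_j \;=\; \sum_{j=1}^n c_{ij}\, e_j \;=\; (Ce)_i,
\]
where the third equality is the definition of $\gamma_i\cdot\gamma_j$ as the signed count of intersections on $S$. The main obstacle is the sign bookkeeping in the middle paragraph; once $\epsilon_k$ is identified with the intersection sign at the center of $P_k$, the remainder is just a regrouping of the sum.
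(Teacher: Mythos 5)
Your proof is correct and follows essentially the same route as the paper's, which compresses the entire argument into two sentences (the zone path is homotopic to the non-intersected edges of the parallelograms it traverses; add them up with the right sign) --- your telescoping sum $\sum_k \epsilon_k e_{j_k}$ and the regrouping by $j$ is exactly that argument spelled out. The one step you defer, identifying $\epsilon_k$ with the local intersection sign, is also the only step the paper defers; if you do carry it out, note that the paper's two statements of the orientation convention ($\det(T_\gamma,V_\gamma)>0$ in Section 3 versus the relation $e^0=-\tfrac1\lambda\sqrt{-1}\,z$ for the canonical data) differ by a sign, and only one of them yields $\epsilon_k=+\operatorname{sign}(\gamma_i\cdot\gamma_{j_k})$ rather than its negative.
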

 \begin{proof}
 Consider the parallelograms that are being traversed by a zone. The zone path is homotopic to the edges of these parallelograms that are \emph{not} being intersected by the zone path. Adding them all up with the right sign proves the claim. 
 \end{proof}
 
 Simplifying the notation from the previous subsection to the case $g=1$,  take a canonical homology basis $\alpha$, $\beta$ of the torus $S$, and write
 \[
 \gamma_i = a_i \alpha + b_i \beta\ .
 \]
 Recall that the coefficient vectors $A=(a_i)$ and $B=(b_i)$ are column vectors in $\Z^n$ so that by Lemma \ref{lemma:intersect}
 \[
 C = A B^t- B A^t \ .
 \]
 Note that $A$ and $B$ are linearly independent. Otherwise, the curve system $\gamma_i$ would be degenerate.
 
 Now suppose that $\alpha$ and $\beta$ are developed to complex numbers $a$ and $b$ --- these will form the  basis of the lattice.
 Then $\gamma_i$ is developed to $z_i = a_i a+ b_i b$, as $\gamma_i$ is homologous to $a_i \alpha +b_i \beta$. Combining this with  Lemma \ref{lemma:develop} gives
 \begin{lemma}
 \[
 a A+ bB = C e = (A B^t- B A^t) e
 \]
 \end{lemma}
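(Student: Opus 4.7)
The plan is to assemble the claim directly from Lemma \ref{lemma:develop} (which gives $z = Ce$) and Lemma \ref{lemma:intersect} specialized to $g=1$ (which gives $C = AB^t - BA^t$). The only new ingredient needed is the identification of the column vector $z = (z_i)$ of developed zone vectors with $aA + bB$.

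First I would observe that the developing map of the flat structure on the torus $S = \C/\Lambda$ sends a closed loop $\gamma$ on $S$ to an element of the period lattice, and that this assignment factors through $H_1(S,\Z)$. Thus if $\gamma_i$ is homologous to $a_i \alpha + b_i\beta$, its developed zone vector satisfies
\[
z_i \;=\; a_i \cdot (\text{develop of }\alpha) \;+\; b_i \cdot (\text{develop of }\beta) \;=\; a_i\, a + b_i\, b.
\]
Stacking these $n$ equations into a single column identity gives $z = aA + bB$.

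Next I would invoke Lemma \ref{lemma:develop} to write $z = Ce$, and combine the two expressions for $z$ to obtain
\[
aA + bB \;=\; Ce.
\]
Finally, Lemma \ref{lemma:intersect} in the case $g=1$ gives $C = AB^t - BA^t$, where $A$ and $B$ are the column vectors from the homology decomposition of the $\gamma_i$; substituting yields $Ce = (AB^t - BA^t)e$, which completes the chain of equalities.

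The main obstacle, if any, is the conceptual one of justifying that the developed zone vector really equals the developed image of the homology class $a_i\alpha + b_i\beta$. This is essentially automatic once one recognizes that for a flat torus the period map $\pi_1(S) \to \C$ is abelian and thus factors through $H_1$, so replacing $\gamma_i$ by any homologous cycle yields the same developed vector. Everything else is bookkeeping: matching column-vector conventions and invoking the two prior lemmas.
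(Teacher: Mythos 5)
Your proposal is correct and follows exactly the route the paper takes: the paper derives $z_i = a_i a + b_i b$ from the fact that $\gamma_i$ is homologous to $a_i\alpha + b_i\beta$, then combines this with Lemma \ref{lemma:develop} and Lemma \ref{lemma:intersect}. Your extra remark that the period map factors through $H_1$ is a slightly more explicit justification of the step the paper treats as immediate, but the argument is the same.
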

 This lemma allows to determine a basis for the period lattice of a tiling if we are given a curve system, edge data, and a choice of a homology basis for the torus.
 
 \begin{corollary}\label{cor:linear}
 Two (admissible) sets $e$ and $e'$ of edge data determine the same lattice if $Ce=Ce'$.
 \end{corollary}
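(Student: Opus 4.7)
The plan is to invoke the lemma preceding the corollary, which states that the basis $(a,b)$ of the period lattice of a tiling with edge data $e$ is determined by the relation
\[
aA + bB = Ce.
\]
So if $e$ and $e'$ are two admissible edge data with associated lattice bases $(a,b)$ and $(a',b')$, respectively, and if $Ce = Ce'$, then we immediately get
\[
aA + bB = a'A + b'B,
\]
i.e.\ $(a-a')A + (b-b')B = 0$ in $\C^n$.

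Next I would exploit the linear independence of $A$ and $B$. These are integer column vectors in $\Z^n$, and they were noted to be linearly independent in the preceding discussion (otherwise the curve system $\Gamma$ would be degenerate, contradicting Lemma \ref{lemma:nondegenerate} for genus $1$). Since $A$ and $B$ are linearly independent over $\R$, splitting the identity $(a-a')A + (b-b')B = 0$ into real and imaginary parts gives two real equations
\[
\re(a-a') A + \re(b-b') B = 0, \qquad \im(a-a') A + \im(b-b') B = 0,
\]
each of which forces its real coefficients to vanish. Hence $a = a'$ and $b = b'$.

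The conclusion is then immediate: the basis $(a,b)$ determines the period lattice, so $\Lambda = a\Z + b\Z = a'\Z + b'\Z$, as claimed. There is no real obstacle here; the substance of the corollary is already carried by the preceding lemma, and the only additional input is the $\R$-linear independence of $A$ and $B$, which is exactly the genus-$1$ non-degeneracy hypothesis.
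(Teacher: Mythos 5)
Your proof is correct and follows exactly the route the paper intends: the corollary is an immediate consequence of the preceding lemma $aA+bB=Ce$ together with the linear independence of $A$ and $B$, which is precisely the detail you supply (the paper omits the argument, noting only that the same basis $(a,b)$ is produced). Nothing further is needed.
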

 The conclusion is actually a bit stronger --- for a given homology basis, both edge data produce the same basis for the lattice. It might well be that
 $Ce\ne Ce'$ but the generated lattices are still the same. The important point here is that staying in the same lattice is just a linear condition on the edge data. As the rank of $C$ is 2, we obtain:
 
  \begin{corollary} For every admissible edge data $e\in \C^n$ there is a neighborhood $U$ of $e$ in $\C^n$ and an affine subspace $E_e$ of complex codimension 2 through $e$ in $\C^n$ such that all edge data in $U\cap E_e$ are admissible and define tilings with the same 
 basis of the period lattice.
 \end{corollary}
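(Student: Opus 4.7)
The plan is to exhibit $E_e$ explicitly as the affine subspace $E_e := e + \ker_{\C}(C)$ and verify each requirement of the corollary by invoking the two preceding corollaries.

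First I would argue the codimension. Since $\Gamma$ has genus $1$, Lemma \ref{lemma:nondegenerate} tells us the curve system is non-degenerate, so the generalized intersection matrix $C$ has real rank $2$. Because $C$ has integer entries, its rank over $\C$ equals its rank over $\R$, so $\ker_{\C}(C) \subset \C^n$ is a complex linear subspace of codimension exactly $2$. Thus $E_e = e + \ker_{\C}(C)$ is an affine subspace of complex codimension $2$ passing through $e$.

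Next I would verify the lattice property. By construction, any $e' \in E_e$ satisfies $C e' = C e$, so by Corollary \ref{cor:linear} the edge data $e'$ and $e$ determine the same period lattice; moreover, as noted in the remark following that corollary, they in fact determine the same basis of the lattice for the fixed homology basis $\alpha,\beta$. Finally, for admissibility I would invoke Corollary \ref{cor:localdim}: since the admissibility condition $c_{ij}\det_\R(e_i,e_j) > 0$ (for all $i,j$ with $c_{ij}\ne 0$) is a finite intersection of open conditions, there is an open neighborhood $U$ of $e$ in $\C^n$ consisting entirely of admissible edge data. Intersecting yields that every $e' \in U \cap E_e$ is admissible and defines a tiling with the same lattice basis as $e$.

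There is no substantive obstacle here; the statement is essentially a packaging of Corollaries \ref{cor:localdim} and \ref{cor:linear} together with the rank computation. The only point deserving care is that the relevant codimension is computed over $\C$ (not $\R$), which is justified because $C$ is an integer matrix, so its $\R$-rank and $\C$-rank agree and both equal $2g = 2$ in the genus-$1$ setting by Lemma \ref{lemma:eigen}.
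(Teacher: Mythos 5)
Your proposal is correct and follows essentially the same route as the paper, which derives this corollary directly from Corollary \ref{cor:linear} (the linear condition $Ce'=Ce$ fixing the lattice basis), the openness of the admissibility inequalities from Corollary \ref{cor:localdim}, and the fact that $\operatorname{rank} C = 2$ for a non-degenerate genus-one system. Your explicit choice $E_e = e + \ker_{\C}(C)$ and the remark that the rank is field-independent just make the paper's implicit argument fully precise.
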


\subsection{Canonical Edge Data}\label{subsec:canonical}

 We have seen in section \ref{sec:geometric} that an essential curve system of genus one always has admissible edge data and thus comes from a periodic tiling.
 Our next goal is to show that  such   edge data can be defined quite canonically, thus leading to a canonical tiling with the given curve system. This will be our distinguished tiling mentioned in the Structure Theorem \ref{theorem:structure}.
 
 Recall from Lemma \ref{lemma:eigen} that the generalized intersection matrix $C$ of a curve system $\Gamma$ of genus 1 has rank 2 with a pair of conjugate imaginary eigenvalues.
 \begin{definition}
 We call   a non-zero eigenvector $e^0$ of $C$ with positive imaginary  eigenvalue  the canonical edge data. It is uniquely determined up to multiplication by a complex number.
 \end{definition}

 Then we claim:
 
 \begin{theorem}
 The canonical edge data is admissible.
 \end{theorem}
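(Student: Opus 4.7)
The plan is to reduce the family of coordinatewise inequalities $c_{ij}\det_{\R}(e_i^0,e_j^0)>0$ to a single positivity statement attached to the eigenvector $e^0$. The crucial observation is that, since $i\lambda\ne 0$, the eigenvector $e^0$ lies in the image of $C$; by Lemma~\ref{lemma:intersect} (applied with $g=1$), this image is the $\C$-span of the two columns $A$ and $B$. So I write $e^0=aA+bB$ with $a,b\in\C$, which coordinate-wise reads $e_i^0=a\,a_i+b\,b_i$.

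Next I compute $\det_{\R}(e_i^0,e_j^0)$ directly. The $\R$-linear map $\Phi:\R^2\to\C$ defined by $(s,t)\mapsto sa+tb$ has real determinant $\im(\overline{a}b)$, and $e_i^0=\Phi(a_i,b_i)$. Hence
\[
\det\nolimits_{\R}(e_i^0,e_j^0)=\im(\overline{a}b)\cdot(a_ib_j-a_jb_i)=\im(\overline{a}b)\cdot c_{ij},
\]
so $c_{ij}\det_{\R}(e_i^0,e_j^0)=\im(\overline{a}b)\cdot c_{ij}^2$, and admissibility collapses to the single inequality $\im(\overline{a}b)>0$.

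To verify this, I substitute $e^0=aA+bB$ into $Ce^0=i\lambda e^0$ using $C=AB^t-BA^t$. With the real abbreviations $p=A^tA$, $q=A^tB$, $r=B^tB$, where $p,r>0$ since $A,B\in\Z^n\setminus\{0\}$, the eigenvalue relation becomes the $2\times 2$ system
\[
\begin{pmatrix}q & r\\ -p & -q\end{pmatrix}\begin{pmatrix}a\\ b\end{pmatrix}=i\lambda\begin{pmatrix}a\\ b\end{pmatrix}.
\]
This matrix has trace zero and determinant $pr-q^2$, strictly positive by Cauchy--Schwarz since $A$ and $B$ are linearly independent (the curve system is non-degenerate by Lemma~\ref{lemma:nondegenerate}). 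Its eigenvalues are therefore $\pm i\sqrt{pr-q^2}$, forcing $\lambda=\sqrt{pr-q^2}$, and the first row of the system yields $b=a(i\lambda-q)/r$. Then $\im(\overline{a}b)=|a|^2\lambda/r>0$, exactly as required, with the sign of $\lambda$ (not of $-\lambda$) being precisely what the hypothesis ``positive imaginary eigenvalue'' supplies.

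The main obstacle to bridge is the gap between the abstract spectral definition of $e^0$ as an eigenvector in $\C^n$ and the admissibility condition, which is a list of pointwise determinant inequalities. The rank-$2$ factorization $C=AB^t-BA^t$ of Lemma~\ref{lemma:intersect} is precisely what pins the individual entries $e_i^0$ down in a uniform way via the universal $\R$-linear map $\Phi$; once this is in place, the problem collapses to the single $2\times 2$ eigenvalue calculation above, and the sign bookkeeping works out on the nose.
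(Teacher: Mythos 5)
Your proof is correct, and it takes a genuinely different (and more self-contained) route than the paper's. The paper's argument is the one-line observation that $z = Ce^0 = \lambda\sqrt{-1}\,e^0$ means the edge vectors are a fixed rotation-and-scaling of the zone vectors, so $\det_{\R}(e^0_i,e^0_j)$ and $\det_{\R}(z_i,z_j)$ have the same sign; it then leaves implicit the remaining fact that $\det_{\R}(z_i,z_j)$ has the same sign as $c_{ij}$, which amounts to the developed homology basis $(a,b)$ with $aA+bB=Ce^0$ being positively oriented. Your approach supplies exactly that missing verification: by writing $e^0=aA+bB$ via the rank-two factorization $C=AB^t-BA^t$, you collapse all the pairwise conditions to the single inequality $\im(\overline{a}b)>0$ (note your $(a,b)$ differs from the paper's lattice basis only by the scalar $1/(i\lambda)$, which does not affect this sign), and then you prove that inequality by the explicit $2\times 2$ restriction of $C$ to $\mathrm{span}\{A,B\}$, where the trace-zero, positive-determinant structure forces $\lambda=\sqrt{pr-q^2}$ and $\im(\overline{a}b)=|a|^2\lambda/r$. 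The only micro-step worth making explicit is that $a\neq 0$ (if $a=0$ the first row gives $rb=0$, hence $e^0=0$, a contradiction), so the final quantity is strictly positive. What your version buys is a complete, purely linear-algebraic proof that also isolates precisely where the hypothesis $\lambda>0$ enters; what the paper's version buys is the geometric picture that canonical edge data are just the zone vectors turned by a quarter-turn.
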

 
 \begin{proof}
  For  canonical edge data $e^0$, the zone vector $z$ is given by Lemma \ref{lemma:develop} as 
 \[
 z = Ce^0 = \lambda \sqrt{-1} e^0
 \]
  for some $\lambda>0$. Thus, if two zone curves $\gamma_i$ and $\gamma_j$ intersect, then $\det_{\R}(z_i,z_j)$ and $\det_{\R}(e^0_i,e^0_j)$
  have the same sign.
 \end{proof}

\begin{corollary}
The space $\tilde \cM(\Gamma)$ is non-empty.
\end{corollary}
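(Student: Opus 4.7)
The plan is to chain together the results already in place. First, the preceding theorem shows that the canonical edge data $e^0$---a non-zero eigenvector of the generalized intersection matrix $C$ with positive imaginary eigenvalue---is admissible. Second, Theorem \ref{thm:compatability} then yields a periodic tiling of the plane by parallelograms whose zone system is the prescribed $\Gamma$.

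To place this tiling inside $\tilde \cM(\Gamma)$, I would extract a basis of the period lattice from the developed zone vectors $z = Ce^0 = \lambda\sqrt{-1}\,e^0$ via Lemma \ref{lemma:develop}. Concretely, the integer column vectors $A, B \in \Z^n$ are linearly independent (otherwise the curve system would be degenerate, contradicting Lemma \ref{lemma:nondegenerate}), so one can solve $aA + bB = \lambda\sqrt{-1}\,e^0$ for complex periods $a, b$ by picking two rows $i, j$ with $(a_i, b_i)$ and $(a_j, b_j)$ linearly independent. Equipping the tiling with this basis gives a marked periodic tiling, hence an element of $\tilde \cM(\Gamma)$.

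I do not anticipate any real obstacle: the only point worth checking is that the resulting $a$ and $b$ are $\R$-linearly independent, so that $\Z a + \Z b$ is a genuine rank-two lattice. This is automatic from admissibility, which forces every parallelogram in the construction of Theorem \ref{thm:compatability} to have strictly positive area; the resulting tiling is therefore genuinely two-dimensional and its period lattice must have full rank. The corollary is thus a short bookkeeping consequence of the admissibility of canonical edge data.
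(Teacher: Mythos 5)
Your proposal is correct and follows exactly the route the paper intends: the corollary is an immediate consequence of the preceding theorem (canonical edge data are admissible) combined with Theorem \ref{thm:compatability}, and the paper offers no further argument. Your additional bookkeeping about extracting the period-lattice basis and checking its rank is sound but not something the paper spells out.
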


 \begin{example}\label{example:proof3}
 The intersection matrix $C$ from Example \ref{example:proof2} has eigenvalues $\pm 2\sqrt{-2}$ and $0$, and the eigenvector for $2\sqrt{-2}$ is 
 \[
 e^0 = \left(
 -1-2\sqrt{-2}, 1-\sqrt{-2},1-\sqrt{-2},3
 \right)
 \]
 In Figure \ref{fig:canonical} we show the original ``hexagonal'' fundamental domain and the canonical one using the edge  data $e^0$.
 
 \begin{figure}[h] 
   \centering
   \includegraphics[width=.5\linewidth]{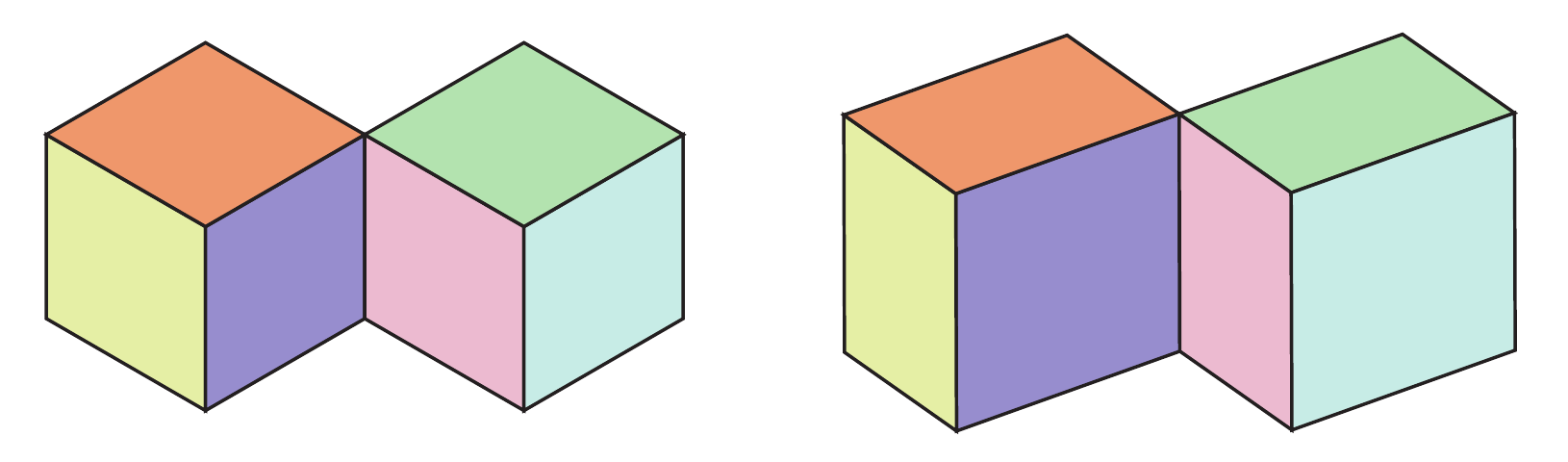} 
   \caption{Fundamental domains for original and canonical edge data}
   \label{fig:canonical}
\end{figure} 

 \end{example}

 \begin{question}
Is the canonical tiling for a given curve system  optimal in some sense, or can it be characterized geometrically?
 \end{question}
 
Finally, we will prove that  
 
 \begin{theorem} Let  $\Gamma$ be a curve system of genus 1, and $(\alpha, \beta)$ a canonical basis for the homology of $S$.
 Let $e^0$ be the canonical edge data, and $e^1$ be another admissible edge data vector with $C(e_1-e_0)=0$. Then $e^t=(1-t)e_0+t e_1$ are all admissible.
  \end{theorem}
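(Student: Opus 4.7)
For every pair $(i,j)$ with $c_{ij}\ne 0$, admissibility of $e^t$ is the sign condition $c_{ij}\det_{\R}(e^t_i,e^t_j)>0$. Since $e^t$ is affine in $t$, the function $f_{ij}(t):=\det_{\R}(e^t_i,e^t_j)$ is a real polynomial of degree at most two in $t$, and the hypotheses give $c_{ij}f_{ij}(0)>0$ and $c_{ij}f_{ij}(1)>0$. The theorem therefore reduces, for each intersecting pair, to showing that the quadratic $f_{ij}$ has no zero in the open interval $(0,1)$.

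My plan is to exploit the canonical form $e^0_k=\mu z_k$ (with a single $\mu\in\C^{\ast}$, namely $\mu=-\sqrt{-1}/\lambda$) to rewrite the quadratic in factored shape. Writing $e^1=e^0+w$ with $w\in\ker C$, setting $\eta_k:=w_k/(\mu z_k)$, and using $e^t_k=\mu z_k(1+t\eta_k)$, a direct computation gives
\[
f_{ij}(t)\;=\;|\mu|^2\,\im\!\Bigl(\overline{z_i}z_j\cdot A(t)B(t)\Bigr),
\qquad A(t):=1+t\,\overline{\eta_i},\quad B(t):=1+t\eta_j.
\]
Because $\overline{z_i}z_j$ has imaginary part $c_{ij}\det_{\R}(a,b)$ by Lemma~\ref{lemma:compatability}, the admissibility condition for $(i,j)$ becomes the geometric statement that the path $\sigma(t):=A(t)B(t)$ stays in the open half-plane $H_{ij}\subset\C$ bounded by the line through the origin of direction $(\overline{z_i}z_j)^{-1}$. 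Both endpoints $\sigma(0)=1$ and $\sigma(1)$ lie in $H_{ij}$ by admissibility of $e^0$ and $e^1$.

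Two preliminary observations then organize the main argument. The first is the area invariant
\[
\sum_{i<j}c_{ij}\det_{\R}(e^t_i,e^t_j)\;=\;\det_{\R}(a,b),
\]
which is constant along $\tilde{\cM}_0(\Gamma)$ because the lattice basis $a,b$ is fixed there and the total tiled area equals that of the fundamental domain; this in particular prevents all parallelograms from simultaneously degenerating and yields $A(t)\ne 0$ and $B(t)\ne 0$ throughout $[0,1]$. The second is the monotonic-argument property: for each nonzero $\alpha\in\C$, the function $t\mapsto\arg(1+t\alpha)$ on $\R$ is monotonic with total variation exactly $\pi$, so $\arg A(t)+\arg B(t)$ has bounded total variation on $[0,1]$. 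Combining these with the fact that $H_{ij}$ has angular width $\pi$, one tracks the continuous branch of $\arg\sigma(t)$ starting from $\arg\sigma(0)=0$ and aims to show that it cannot exit the angular interval carving out $H_{ij}$.

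The hard part is exactly this last bookkeeping --- ruling out the configuration in which $\arg A(t)$ and $\arg B(t)$ swing in opposite directions and momentarily carry $\arg\sigma$ across the boundary of $H_{ij}$ and back. I expect the cleanest way to close this gap is a contradiction via continuity of the tiling: if $t^{\ast}\in(0,1)$ is the smallest time at which some $f_{i_0j_0}(t^{\ast})=0$, then the parallelogram $P_{i_0j_0}$ degenerates to a segment at $t^{\ast}$ while all other parallelograms remain positively oriented and nondegenerate. Applying the Gauss--Bonnet cone-angle identity from the proof of Theorem~\ref{thm:compatability} at parameter $t^{\ast}$, combined with the area invariant above, forces a vertex of the tiling to carry a cone angle different from $2\pi$, contradicting the validity of the tiling structure just below $t^{\ast}$ and completing the proof.
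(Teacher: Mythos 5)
Your setup is sound and runs parallel to the paper's: reducing admissibility of $e^t$ to a sign condition on the quadratic $f_{ij}$, and using the eigenvector relation $e^0_k=\mu z_k$ to factor $f_{ij}(t)=|\mu|^2\im\bigl(\overline{z_i}z_j\,A(t)B(t)\bigr)$ is a correct and even elegant reformulation. But the proof is not complete, and you say so yourself: the ``bookkeeping'' you defer --- showing that $\sigma(t)=A(t)B(t)$ cannot leave the half-plane $H_{ij}$ --- \emph{is} the theorem, and neither of your two closing devices delivers it. First, the area identity $\sum_{i<j}c_{ij}\det_{\R}(e^t_i,e^t_j)=\det_{\R}(a,b)$ (which is indeed constant on the slice $Ce=\mathrm{const}$, since $\overline{w}^{\,t}Ce=0$ for $w\in\ker C$) only forbids \emph{all} terms from vanishing simultaneously; it does not give $A(t)\neq 0$, i.e.\ it does not prevent a single edge vector $e^t_i$ from passing through the origin. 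Second, the proposed contradiction at the first degeneracy time $t^{\ast}$ via Gauss--Bonnet fails: as $t\uparrow t^{\ast}$ the angles of the collapsing parallelogram tend to $0$ and $\pi$, each cone angle remains $2\pi$ by continuity, and Gauss--Bonnet is satisfied in the limit --- there is no contradiction. A telling symptom is that this closing step never uses that $e^0$ is the eigenvector (only that $e^0$ and $e^1$ are admissible with equal periods), so if it worked it would prove that each fixed-period slice of the admissible set is \emph{convex}; the paper only claims star-shapedness, and its boundary hypersurfaces $\det_{\R}(e_r,e_s)=0$ are explicitly noted to be non-convex.

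What the paper uses to close exactly this gap, and what your argument is missing, is two further pieces of structure. (i) Summing the pairwise admissibility conditions over $j$ gives the \emph{single-index} inequalities $\det_{\R}(e_k,z_k)=\sum_j c_{kj}\det_{\R}(e_k,e_j)>0$, which confine each $e^t_k$ (for $t=0,1$ and hence, by convexity of a half-plane, for all $t$) to an open half-plane determined by $z_k$; in your notation this is precisely the statement that $A(t)$ and $B(t)$ stay in the right half-plane, so $\arg A,\arg B\in(-\pi/2,\pi/2)$ and, being arguments along chords from $1$, each is monotone and trapped between its endpoint values. (ii) The eigenvector relation places $e^0_i$ and $e^0_j$ on the \emph{boundary rays} of the relevant quadrants ($e^0_k$ is the $-90^{\circ}$ rotation of $z_k$ up to a common positive scale), and the paper then runs a four-case quadrant analysis on the positions of $e^1_i$ and $e^1_j$, checking in each case that $\arg e^t_j-\arg e^t_i$ stays in $(0,\pi)$. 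You would need to carry out the analogous case analysis for $\arg A(t)+\arg B(t)$ relative to the interval cut out by $\arg(\overline{z_i}z_j)$; without it, configurations where one argument swings quickly while the other lags are not excluded by the endpoint data alone.
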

 
 \begin{proof}
 By Corollary \ref{cor:linear} we know that the two cycles $\alpha$ and  $\beta$ are mapped to the same vectors $a$ and $b$ for all edge data $e^t$. 
  Denote the  edges  corresponding to zone $\gamma_i$ for parameter value $t$ by $e^t_i$. Then   the zone vectors by $z=z^t= C e^t$ are independent of $t$ by assumption.
 
 We have to verify that the admissibility condition holds. To this end, let $\gamma_i$ intersect $\gamma_j$ positively. 
 By applying a linear transformation if necessary, we can assume for simplicity that for a given fixed intersecting pair, the two zone vectors $z_i$ and $z_j$ are the coordinate vectors $1$ and $\sqrt{-1}$.  For $t=0$, the edge data $e^0$ is an eigenvector of the generalized intersection matrix $C$, i.e. $Ce^0=\lambda\sqrt{-1}e^0$ for some $\lambda>0$. As $z=C e^0$, we get that $e^0 =-\frac1\lambda \sqrt{-1} z$. 
 
Now observe that $e^1_i$ lies in the half plane $\im z<0$ while $e^1_j$ lies in the half plane $\re z>0$ because the edge data is admissible for $t=1$. Moreover, $\det_{\R} (e^1_i,e^1_j)>0$.
We have to show that $\det_{\R} (e^t_i,e^t_j)>0$ for all $t\in[0,1]$. As $e^t_k$ is a convex combination of $e^0_k$ and $e^1_k$ for all $k$, $e^t_i$ stays in the half plane $\im z<0$ and $e^t_j$ stays in the half plane $\re z>0$. We now distinguish four cases, depending on the location of  $e^1_i$ and $e^1_j$ in the quadrants: 

If both $e^1_i$ and $e^1_j$  lie in the quadrant $\re z>0, \im z<0$, the deformation to $e^0_i$ and $e^0_j$ increases the angle between $e^t_i$ and $e^t_j$ with decreasing $t$, so that $\det_{\R} (e^t_i,e^t_j)>0$. 

If  $e^1_i$  lies in the quadrant $\re z>0, \im z<0$ and $e^1_j$  lies in the quadrant $\re z>0, \im z>0$, the same holds for all $e^t_i$ and $e^t_j$, again preserving $\det_{\R} (e^t_i,e^t_j)>0$.

If  $e^1_i$  lies in the quadrant $\re z<0, \im z<0$ and $e^1_j$  lies in the quadrant $\re z>0, \im z<0$, the same holds for all $e^t_i$ and $e^t_j$, again preserving $\det_{\R} (e^t_i,e^t_j)>0$.
If  $e^1_i$  lies in the quadrant $\re z<0, \im z<0$ and $e^1_j$  lies in the quadrant $\re z>0, \im z>0$,  the same holds for all $e^t_i$ and $e^t_j$, and  the deformation to $e^0_i$ and $e^0_j$ decreases the angle between $e^t_i$ and $e^t_j$ with decreasing $t$. Thus  $\det_{\R} (e^t_i,e^t_j)>0$ remains valid again.

\begin{figure}[h] 
   \centering
   \includegraphics[width=5in]{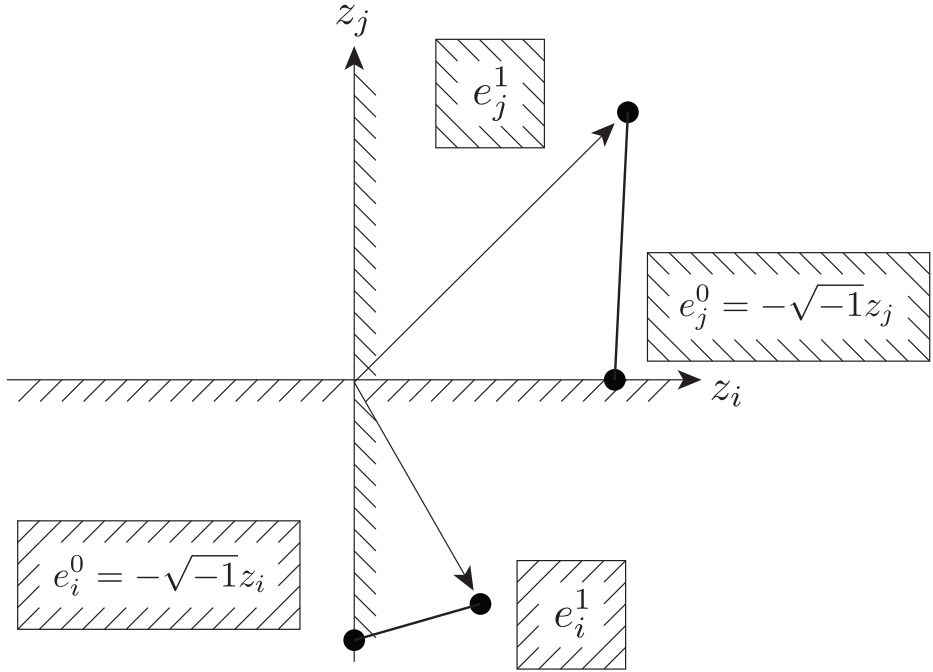} 
   \caption{Convex combination of edge data}
   \label{fig:example}
\end{figure}

Thus in all cases the convex combinations $e^t_i$ and $e^t_j$ satisfy the compatibility condition, and the edge data $e^t$ are admissible.
 \end{proof}
 
\subsection{Epilogue}

As $\cM(\Gamma)$ is naturally a subset of $\C^n$ via the edge data $e$, and two points $e$ and  $e'$ represent similar tilings if they are projectively equivalent, it is natural to consider
the quotient space $\cP(\Gamma)$ as a subset of complex projective space.  Even better, the area $\area(e)$ of a fundamental domain of the tiling defines a sesquilinear form on $\C^n$ that is   invariant under rotations, so that one could identify 
\[
\cP(\Gamma) = \{e \in \cM(\Gamma): \area(e)=1\}/S^1
\]
in the spirit of \cite{Thurston1} and \cite{BG1} to put a geometric structure on $\cP(\Gamma)$. But alas, the area form is represented on $\C^n$ by $\sqrt{-1}C$, where $C$ is the generalized interesction matrix of $\Gamma$. Its rank is 2, leaving us with a highly degenerate geometry on $\cP(\Gamma)$.

 \bigskip

\bibliographystyle{plain}
\bibliography{bibliography}

 \end{document}